\theoremstyle:=definition,remark,plain\do{%
 \expandafter\g@addto@macro\csname th@\theoremstyle\endcsname{%
 \addtolength\thm@preskip\parskip
 }%
 }
\declaretheorem[name=Theorem,numberwithin=section]{thm}
\declaretheorem[name=Proposition,numberlike=thm]{prop}
\declaretheorem[name=Corollary,numberlike=thm]{cor}
\declaretheorem[name=Question,style=definition,qed=$\blacktriangle$,numberlike=thm]{quest}
\declaretheorem[name=Definition,style=definition,qed=$\blacktriangle$,numberlike=thm]{defn}
\declaretheorem[name=Example,style=definition,qed=$\blacktriangle$,numberlike=thm]{ex}
\declaretheorem[name=Remark,style=definition,qed=$\blacktriangle$,numberlike=thm]{rmk}
\newcounter{noteCounter}
\DeclareMathOperator\Id{Id}
\DeclareMathOperator{\End}{End}
\DeclareMathOperator{\Hol}{Hol}
\DeclareMathOperator{\hol}{\mathfrak{hol}}
\newcommand{\w}{\wedge}
\newcommand{\vol}{\mathsf{vol}}
\newcommand{\real}{\operatorname{\mathrm{Re}}}
\newcommand{\imag}{\operatorname{\mathrm{Im}}}
\newcommand{\Stab}{\mathrm{Stab}}
\newcommand{\G}{\mathrm{G}_2}
\newcommand{\Spin}[1]{\mathrm{Spin}(#1)}
\newcommand{\GL}[1]{\mathrm{GL}(#1)}
\newcommand{\U}[1]{\mathrm{U}(#1)}
\newcommand{\SU}[1]{\mathrm{SU}(#1)}
\newcommand{\Sp}[1]{\mathrm{Sp}(#1)}
\newcommand{\SO}[1]{\mathrm{SO}(#1)}
\newcommand{\OO}[1]{\mathrm{O}(#1)}
\newcommand{\R}{\mathbb R}
\newcommand{\C}{\mathbb C}
\newcommand{\Qu}{\mathbb H}
\newcommand{\ph}{\varphi}
\newcommand{\ps}{\psi}
\newcommand{\Ph}{\Phi}
\newcommand{\st}{\star}
\newcommand{\hk}{\mathbin{\! \hbox{\vrule height0.3pt width5pt depth 0.2pt \vrule height5pt width0.4pt depth 0.2pt}}}
\newcommand{\del}{\partial}
\newcommand{\ddx}[1]{\frac{\del \, \,}{\del x^{#1}}}
\newcommand{\ddy}[1]{\frac{\del \, \,}{\del y^{#1}}}
\newcommand{\rest}[2]{{#1}|_{#2}}
\newcommand{\wt}{\widetilde}
\newcommand{\wh}{\widehat}
\newcommand{\ol}{\overline}
\newcommand{\rG}{\mathrm{G}}
\newcommand{\rH}{\mathrm{H}}
\newcommand{\frg}{\mathfrak{g}}
\newcommand{\frh}{\mathfrak{h}}
\newcommand{\so}{\mathfrak{so}}
\newcommand{\gl}{\mathfrak{gl}}
\newcommand{\wG}{\widehat{\mathrm{G}}}
\newcommand{\Aut}{\mathrm{Aut}}
\newcommand{\Ad}{\mathrm{Ad}}
\begin{document}

\title{Extrinsic geometry of calibrated submanifolds}

\author{Spiro Karigiannis \\ \emph{Department of Pure Mathematics, University of Waterloo} \\ \tt{karigiannis@uwaterloo.ca} \and Luc\'ia Mart\'in-Merch\'an \\ \emph{Department of Pure Mathematics, University of Waterloo} \\ \tt{lucia.martinmerchan@uwaterloo.ca} }

\maketitle

\begin{abstract}
Given a calibration $\alpha$ whose stabilizer acts transitively on the Grassmanian of calibrated planes, we introduce a nontrivial Lie-theoretic condition on $\alpha$, which we call \emph{compliancy}, and show that this condition holds for many interesting geometric calibrations, including K\"ahler, special Lagrangian, associative, coassociative, and Cayley. We determine a sufficient condition that ensures compliancy of $\alpha$, we completely characterize compliancy in terms of properties of a natural involution determined by a calibrated plane, and we relate compliancy to the geometry of the calibrated Grassmanian.

The condition that a Riemannian immersion $\iota \colon L \to M$ be calibrated is a first order condition. By contrast, its extrinsic geometry, given by the second fundamental form $A$ and the induced tangent and normal connections $\nabla$ on $TL$ and $D$ on $NL$, respectively, is second order information. We characterize the conditions imposed on the extrinsic geometric data $(A, \nabla, D)$ when the Riemannian immersion $\iota \colon L \to M$ is calibrated with respect to a calibration $\alpha$ on $M$ which is both \emph{parallel} and \emph{compliant}. This motivate the definition of an \emph{infinitesimally calibrated} Riemannian immersion, generalizing the classical notion of a superminimal surface in $\R^4$.
\end{abstract}

\tableofcontents

\section{Introduction} \label{sec:introduction}

Let $(M^n, g)$ be a Riemannian manifold, and let $L^k$ be a submanifold with induced metric $\rest{g}{L}$. The classical Gauss--Codazzi--Ricci equations relate three geometric quantities: the intrinsic geometry of $M$ (that is, the curvature of $g$), the intrinsic geometry of $L$ (the curvature of $\rest{g}{L}$), and the \emph{extrinsic} geometry of the immersion of $L$ in $M$, encoded by the second fundamental form $A$, the induced tangent connection $\nabla$ on $TL$, and the induced normal connection $D$ on the normal bundle $NL$.

Suppose $(M, g)$ has \emph{special Riemannian holonomy} $\rG$. This is encoded by the existence of a torsion-free $\rG$-structure for some Lie group $\rG \subset \SO{n}$, and is characterized by the existence of certain parallel tensors. These always include parallel differential forms which are calibrations, yielding a distinguished class of calibrated submanifolds. The Ambrose--Singer Theorem tells us that the curvature $\ol{R}$ of the Levi-Civita connection $\ol{\nabla}$ of $g$ lies in the Lie algebra $\frg$ of the holonomy group, so the curvature of $g$ is special. The Harvey--Lawson Theorem tells us that a calibrated submanifold is minimal, so its second fundamental form is special. Thus in this case we might expect the Gauss--Codazzi--Ricci equations to tell us more about the relations between the intrinsic geometry of $(M, g)$, the intrinsic geometric of $(L, \rest{g}{L})$, and the extrinsic geometry of $L$ in $M$.

In this paper we examine this situation. Given a torsion-free $\rG$-structure on $(M, g)$, and a calibration $k$-form $\alpha$ stablized by $\rG$, there is determined a subgroup $\rH \subset \rG$ which stabilizes any $\alpha$-calibrated plane. (We need assume that $\rG$ acts transitively on the $\alpha$-calibrated Grassmanian, which is the case in all geometric examples of interest to us.) In this setting, we define a Lie-theoretic condition, which we call \emph{compliancy}, and which we show holds for all of our examples. We investigate the geometric meaning of compliancy and establish a sufficient condition for compliancy that is easy to check, as and we characterize compliancy in terms of a natural involution determined by a calibrated plane.

We prove that, if the compliant calibration is parallel (actually, we impose a slightly stronger condition, which again is always satisfied for a torsion-free $\rG$-structure, see Definition~\ref{defn:parallel-compliant}), and if $L$ is $\alpha$-calibrated, then its extrinsic geometric data $(A, \nabla, D)$ satisfy the following relations. Let $\frg$ and $\frh$ be the Lie algebras of $\rG$ and $\rH$, respectively, let $\wh{\nabla} = \nabla \oplus D$ be the direct sum connection on $\rest{TM}{L} = TL \oplus NL$, and let $B$ be determined from $A$ by $B_u (v, \xi) = (- A_u^* \xi, A_u v)$ for all $u, v \in \Gamma(TL)$ and $\xi \in \Gamma(NL)$. Then we prove in Theorem~\ref{thm:main} that
\begin{equation} \label{eq:intro}
\hol(\wh{\nabla}) \subset \frh \qquad \text{and} \qquad B_u \in \frg \quad \text{for all $u \in \Gamma(TL)$.}
\end{equation}
We show explicitly what these conditions say in the case of the K\"ahler, special Lagrangian, associative, coassociative, and Cayley calibrations. The results in the K\"ahler case are of course classical, but the results for the other calibrated geometries do not seem to be as well-known.

Finally, we consider the question of when one could establish a converse. That is, (stated somewhat imprecisely here), if~\eqref{eq:intro} holds for a submanifold $L$ of $(M, g)$, then under what additional hypotheses could one conclude that the submanifold is in fact calibrated?

\textbf{Notation and conventions.} We identify vectors and covectors throughout using the inner product, so all indices are subscripts. In particular, this means $\Lambda^2 (\R^n) = \so(n)$. We sum over repeated indices. The Lie algebra action of a Lie group $\rG \subset \GL{n}$ with Lie algebra $\frg \subset \gl(n)$ on the space $\Lambda^k (\R^n)$ of $k$-forms is denoted by $\diamond$. That is, if $X = X_{ij} e_i \otimes e_j \in \frg$, and $\alpha \in \Lambda^k (\R^n)$, then
\begin{equation} \label{eq:diamond}
X \diamond \alpha = X_{ij} e_i \w (e_j \hk \alpha).
\end{equation}
This implies that, for $\alpha = \frac{1}{k!} \alpha_{i_1 \cdots i_k} e_{i_1} \w \cdots \w e_{i_k}$, we have $X \diamond \alpha = \frac{1}{k!} (X \diamond \alpha)_{i_1 \cdots i_k} e_{i_1} \w \cdots \w e_{i_k}$, where
$$ (X \diamond \alpha)_{i_1 \cdots i_k} = X_{i_1 p} \alpha_{p i_2 \cdots i_k} + X_{i_2 p} \alpha_{i_1 p i_3 \cdots i_k} + \cdots + X_{i_k p} \alpha_{i_1 \cdots i_{k-1} p}. $$

Given a connection $\nabla$, we use $\Hol(\nabla)$ to denote its holonomy group, and $\Hol^0(\nabla)$ to denote its restricted holonomy group (along null-homotopic loops), which is the identity component of $\Hol(\nabla)$. Both of these groups have the same Lie algebra $\hol(\nabla)$, which is the holonomy algebra of $\nabla$.

\textbf{Acknowledgements.} The seeds of this project were first planted in 2014 during some preliminary conversations between the first author and Jonathan Herman. The first author thanks Daren Cheng and Jesse Madnick for useful discussions. The research of the first author is supported by NSERC.

\section{Calibrated geometry and compliant calibrations} \label{sec:calibrated-main}

We first review some notions of calibrated geometry, including the most important geometric examples. In the process, we identify a particular condition on a calibration form, which we call compliancy, that is a crucial ingredient for the proof of our main theorem. We investigate the geometric meaning of compliancy and show that this condition holds in all interesting cases.

\subsection{Important geometric examples of calibrations} \label{sec:calibrated-review}

Equip $\R^n$ with the standard Euclidean inner product $\langle \cdot, \cdot \rangle$ and orientation. We identify vectors and covectors throughout using $\langle \cdot, \cdot \rangle$. Let $1 \leq k \leq n - 1$. Given an oriented $k$-plane $W \subset \R^n$, we obtain a volume form $\vol_{W} \in \Lambda^k W^*$ on $W$ induced from the given orientation and the restriction of $\langle \cdot, \cdot \rangle$ to $W$. Thus $\vol_{W} (e_1, \ldots, e_k) = 1$ if and only if $\{ e_1, \ldots, e_k \}$ is an oriented orthonormal basis for $W$.

Let $\alpha \in \Lambda^k (\R^n)$. Since $\vol_{W}$ spans the $1$-dimensional space $\Lambda^k W^*$, we have $\rest{\alpha}{W} = \lambda(W) \vol_{W}$ for some $\lambda(W) \in \R$. We say that $\alpha$ is a \emph{calibration} if $\lambda(W) \leq 1$ for all oriented $k$-planes $W \subset \R^n$, with equality on at least one $W$. This is often expressed as $\rest{\alpha}{W} \leq \vol_{W}$ for any oriented $k$-plane $W \subset \R^n$, with equality on at least one $W$.

Thus the property of the $k$-form $\alpha$ being a calibration is that $| \alpha(e_1, \ldots, e_k) | \leq 1$ whenever $\{ e_1, \ldots, e_k \}$ is an orthonormal set in $\R^n$, with equality on at least one such set. Since the space of oriented $k$-planes in $\R^n$ is compact, it is clear that if $\alpha$ is nonzero, we can always scale it to make $\alpha$ a calibration. [This condition on $\alpha$ is also called ``having comass one''.]

If $\alpha \in \Lambda^k (\R^n)$ is a calibration, then an oriented $k$-plane $W \subset \R^n$ is called \emph{calibrated} with respect to $\alpha$ if $\rest{\alpha}{W} = \vol_{W}$.

All the calibrations of interest to us arise from the notion of a $\rG$-structure on $\R^n$, where $\rG$ is a particular Lie subgroup of $\SO{n}$ which is the stabilizer in $\SO{n}$ of a finite set $\{ \alpha_0, \ldots, \alpha_N \}$ of skew-symmetric forms on $\R^n$, including the calibration $\alpha = \alpha_0$ of interest. That is, if $P \in \rG$, then $P^* \alpha = \alpha$. In many cases, but not all, the group $\rG$ is \emph{precisely} the stabilizer of $\alpha$, but this is not relevant for us.

We are primarily concerned with the following five particular examples which arise in the study of Riemannian manifolds with special holonomy. Proofs that these are indeed calibrations can be found in Harvey--Lawson~\cite{HL}.

\begin{ex} \label{ex:Kahler}
Let $n=2m$. Identify $\R^{2m} = \R^m \oplus \R^m = \C^m$, with real coordinates $x^1, \ldots, x^m, y^1, \ldots, y^m$ and associated complex coordinates $z^j = x^j + i y^j$ for $1 \leq j \leq m$. The $2$-form
$$ \omega = \sum_{j=1}^m dx^j \w dy^j = \frac{i}{2} \sum_{j=1}^m dz^j \w d\ol{z}^j $$
is the standard \emph{K\"ahler form} on $\C^m$. For any $1 \leq p \leq m$, the $2p$-form $\frac{1}{p!} \omega^p$ is a calibration on $\C^m$, called the \emph{K\"ahler calibration}. An oriented $2p$-plane $W$ is calibrated with respect to $\frac{1}{p!} \omega^p$ if and only if $W$ is a \emph{complex} subspace of complex dimension $p$, with the canonical orientation induced from the complex structure. An example of a calibrated $2p$-plane $W$ is the complex $p$-plane given by the oriented orthonormal basis $\{ \ddx{1}, \ddy{1}, \ldots, \ddx{p}, \ddy{p} \}$. In this case $\rG = \U{m}$ is the stabilizer in $\SO{2m}$ of $\omega$.
\end{ex}

\begin{ex} \label{ex:slag}
Let $\R^{2m} = \C^m$ as in Example~\ref{ex:Kahler}. Let $\Upsilon = dz^1 \w \cdots \w dz^m$ be the canonical complex volume form, which is a complex-valued $m$-form of type $(m,0)$. The real $m$-form $\real (\Upsilon)$ is a calibration on $\C^m$. An oriented $m$-plane $W$ which is calibrated with respect to $\real (\Upsilon)$ is called a \emph{special Lagrangian plane}. An example of a special Lagrangian $m$-plane $W$ is the plane given by the oriented orthonormal basis $\{ \ddx{1}, \ldots, \ddx{m} \}$. In this case $\rG = \SU{m} \subset \SO{2m}$ is the stabilizer of $\omega, \real \Upsilon, \imag \Upsilon$.
\end{ex}

\begin{ex} \label{ex:assoc}
Let $n = 7$, and let $x^1, \ldots, x^7$ be the standard coordinates on $\R^7$. The $3$-form
\begin{equation} \label{eq:assoc-form}
\begin{aligned}
\ph & = dx^1 \w dx^2 \w dx^3 - dx^1 \w (dx^4 \w dx^5 + dx^6 \w dx^7) \\
& \qquad {} - dx^2 \w (dx^4 \w dx^6 + dx^7 \w dx^5) - dx^3 \w (dx^4 \w dx^7 + dx^5 \w dx^6)
\end{aligned}
\end{equation}
is a calibration on $\R^7$, called the \emph{associative calibration}. An oriented $3$-plane $W$ which is calibrated with respect to $\ph$ is called an \emph{associative plane}. An example of an associative $3$-plane $W$ is the plane given by the oriented orthonormal basis $\{ \ddx{1}, \ddx{2}, \ddx{3} \}$. In this case $\rG = \G \subset \SO{7}$ is the stabilizer of $\ph$.
\end{ex}

\begin{ex} \label{ex:coassoc}
Let $n = 7$ and let $\ph$ be as in Example~\ref{ex:assoc}. The $4$-form
\begin{equation} \label{eq:coassoc-form}
\begin{aligned}
\ps = \st \ph & = dx^4 \w dx^5 \w dx^6 \w dx^7 - dx^2 \w dx^3 \w (dx^4 \w dx^5 + dx^6 \w dx^7) \\
& \qquad {} - dx^3 \w dx^1 \w (dx^4 \w dx^6 + dx^7 \w dx^5) - dx^1 \w dx^2 \w (dx^4 \w dx^7 + dx^5 \w dx^6)
\end{aligned}
\end{equation}
is a calibration on $\R^7$, called the \emph{coassociative calibration}. An oriented $4$-plane $W$ which is calibrated with respect to $\ps$ is called a \emph{coassociative plane}. An example of a coassociative $4$-plane $W$ is the plane given by the oriented orthonormal basis $\{ \ddx{4}, \ddx{5}, \ddx{6}, \ddx{7} \}$. In this case $\rG = \G$ is the stabilizer in $\SO{7}$ of $\ps$.
\end{ex}

\begin{ex} \label{ex:Cayley}
Let $n = 8$, and let $x^0, x^1, \ldots, x^7$ be the standard coordinates on $\R^8$. Let $\ph, \ps$ be as in Examples~\ref{ex:assoc} and~\ref{ex:coassoc}. The $4$-form
\begin{equation} \label{eq:Cayley-form}
\begin{aligned}
\Ph & = dx^0 \w \ph + \ps \\
& = dx^0 \w dx^1 \w dx^2 \w dx^3 - (dx^0 \w dx^1 + dx^2 \w dx^3) \w (dx^4 \w dx^5 + dx^6 \w dx^7) \\
& \qquad {} - (dx^0 \w dx^2 + dx^3 \w dx^1) \w (dx^4 \w dx^6 + dx^7 \w dx^5) \\
& \qquad {} - (dx^0 \w dx^3 + dx^1 \w dx^2) \w (dx^4 \w dx^7 + dx^5 \w dx^6) + dx^4 \w dx^5 \w dx^6 \w dx^7
\end{aligned}
\end{equation}
is a calibration on $\R^8$, called the \emph{Cayley calibration}. An oriented $4$-plane $W$ which is calibrated with respect to $\Ph$ is called a \emph{Cayley plane}. Note that in this case $\st \Phi = \Ph$. An example of a Cayley $4$-plane $W$ is the plane given by the oriented orthonormal basis $\{ \ddx{0}, \ddx{1}, \ddx{2}, \ddx{3} \}$. In this case $\rG = \Spin{7} \subset \SO{8}$ is the stabilizer of $\Ph$.
\end{ex}

\subsection{Compliant calibrations} \label{sec:compliant}

Fix a calibration $k$-form $\alpha$ on $\R^n$, corresponding to a $\rG$ structure on $\R^n$, where $\rG$ is the stabilizer in $\SO{n}$ of a finite set of skew-symmetric forms $\{ \alpha_0, \alpha_1, \ldots, \alpha_N \}$ on $\R^n$, including $\alpha = \alpha_0$. That is,
\begin{equation} \label{eq:G-defn}
\rG = \{ P \in \SO{n} : P^* \alpha_j = \alpha_j, \, \text{for $0 \leq j \leq N$} \}.
\end{equation}

\begin{defn} \label{defn:H}
Let $W \subset \R^n$ be an oriented $k$-plane calibrated with respect to $\alpha$. The subgroup $\rH_W = \Stab(W)$ is the stabilizer in $\rG$ of $W$. That is,
$$ \rH_W = \{ P \in \rG : P(W) = W \}. \qedhere $$
\end{defn}

Note that with respect to the orthogonal direct sum decomposition $\R^n = W \oplus W^{\perp}$, and the orientation on $W^{\perp}$ induced from the chosen orientation on $W$ and the standard orientation on $\R^n$, it is clear that
\begin{equation} \label{eq:H-description}
\rH_W = \rG \cap \big( \SO{W} \times \SO{W^{\perp}} \big).
\end{equation}

Suppose that $\rG$ \emph{acts transitively} on the $\alpha$-calibrated $k$-planes. If $W, W_0$ are two $\alpha$-calibrated $k$-planes, then $\rH_W = P \rH_{W_0} P^{-1}$, where $P \in \rG$ satisfies $W = P W_0$. Thus in this case, we can consider $\rH$ as a well-defined conjugacy class of subgroups of $\rG$, and the Grassmanian $\mathrm{Gr}_{\alpha}$ of $\alpha$-calibrated $k$-planes can be identified with $\rG/\rH$.

$$ \text{From now on, \emph{we always assume that} $\rG$ acts transitively on $\mathrm{Gr}_{\alpha}$.} $$

All of the interesting geometric calibrations discussed in Examples~\ref{ex:Kahler}--\ref{ex:Cayley} have this important property, so in all these cases we have a well-defined conjugacy class $\rH$ of subgroups of $\rG$, which we can identify with the stabilizer in $\rG$ of a particular standard $\alpha$-calibrated $k$-plane $W_0$. The actual $\rG$ and $\rH_{W_0}$ for these examples are given in Table~\ref{table:GH}. Details are in Harvey--Lawson~\cite{HL}.

\begin{table}[ht] \label{table:GH}
\centering{
\renewcommand{\arraystretch}{1.2}
\begin{tabular}{|l|l|l|l|l|l|} \hline
$n$ & $k$ & Calibration & $\alpha$ & $\rG$ & $\rH = \rH_{W_0}$ \\ \hline
$2m$ & $2p$ & K\"ahler & $\frac{1}{p!} \omega^p$ & $\U{m}$ & $\U{p} \times \U{m-p}$ \\ \hline
$2m$ & $m$ & special Lagrangian & $\real \Upsilon$ & $\SU{m}$ & $\SO{m}$ \\ \hline
$7$ & $3$ & associative & $\ph$ & $\G$ & $\SO{4} = \Sp{1}^2 / \{ \pm 1 \}$ \\ \hline
$7$ & $4$ & coassociative & $\ps$ & $\G$ & $\SO{4} = \Sp{1}^2 / \{ \pm 1 \}$ \\ \hline
$8$ & $4$ & Cayley & $\Ph$ & $\Spin{7}$ & $\Sp{1}^3 / \{ \pm 1 \}$ \\ \hline
\end{tabular}
\caption{The groups $\rG$ and $\rH$ for particular interesting geometric calibrations.}
}
\end{table}

For the calibrations given in Table~\ref{table:GH}, the embedding of the subgroup $\rH$ in $\rG$ is as follows:
\begin{itemize} \setlength\itemsep{-1mm}
\item In the K\"ahler case, we have the obvious embedding $\U{p} \times \U{m-p} \subset \U{m}$.
\item In the special Lagrangian case, $\SO{m}$ is embedded in $\SU{m}$ as the real elements, that is, $\SO{m} = \{ P \in \SU{m} : \ol{P} = P \}$. Under the usual identification of $\GL{m, \C}$ with a subgroup of $\GL{2m, \R}$, given by $P + i Q \leftrightarrow \left( \begin{smallmatrix} P & - Q \\ Q & P \end{smallmatrix} \right)$, this says that $\SO{m}$ is the diagonal $\{ \left( \begin{smallmatrix} P & 0 \\ 0 & P \end{smallmatrix} \right) : P \in \SO{m} \}$.
\item In the associative and coassociative cases, an element $\pm (p_1, p_2) \in \SO{4} = \Sp{1}^2 / \{ \pm 1 \}$ acts on $\R^7 = \imag \Qu \oplus \Qu$ by
$$ \pm (p_1, p_2) (v, a) = (p_1 v \ol{p}_1, p_1 a \ol{p}_2), $$
which embeds $\SO{4}$ as a subgroup of $\G \subset \SO{7}$.
\item In the Cayley case, an element $\pm (p_1, p_2, p_3) \in \Sp{1}^3 / \{ \pm 1 \}$ acts on $\R^8 = \Qu \oplus \Qu$ by
$$ \pm (p_1, p_2, p_3) (a, b) = (p_1 a \ol{p}_2, p_1 b \ol{p}_3), $$
which embeds $\Sp{1}^3 / \{ \pm 1 \}$ as a subgroup of $\Spin{7} \subset \SO{8}$.
\end{itemize}
Note that $\rG$ and $\rH$ are always \emph{Lie groups}, because they are topologically closed subgroups of $\SO{n}$.

Let $\frg$ be the Lie algebra of $\rG$, and let $\frh$  be the Lie algebra of $\rH$. The inclusions $\rH \subseteq \rG \subseteq \SO{n}$ yields the inclusions
$$ \frh \subseteq \frg \subseteq \so(n) = \Lambda^2 (\R^n). $$
Since the Lie algebra of $\SO{W} \times \SO{W^{\perp}}$ is $\Lambda^2 W \oplus \Lambda^2 W^{\perp}$, equations~\eqref{eq:G-defn} and~\eqref{eq:H-description} imply that
\begin{equation} \label{eq:gh-defn}
\begin{aligned}
\frg & = \{ X \in \Lambda^2(\R^n) : X \diamond \alpha_j = 0, \, \text{for $0 \leq j \leq N$} \}, \\
\frh & = \frg \cap (\Lambda^2 W \oplus \Lambda^2 W^{\perp}),
\end{aligned}
\end{equation}
where we use the $\diamond$ notation of~\eqref{eq:diamond}.

We have an orthogonal decomposition
\begin{equation} \label{eq:W-decomp}
\so(n) = \Lambda^2 (\R^n) = \Lambda^2 (W \oplus W^{\perp}) = (\Lambda^2 W \oplus \Lambda^2 W^{\perp}) \oplus (W \otimes W^{\perp}).
\end{equation}
Let $\frh^{\perp_{\frg}}$ denote the orthogonal complement of $\frh$ \emph{in} $\frg$, so $\frg = \frh \oplus \frh^{\perp_{\frg}}$. That is,
$$ \frg = \big( \frg \cap (\Lambda^2 W \oplus \Lambda^2 W^{\perp}) \big) \oplus \frh^{\perp_{\frg}}. $$
However, it is \emph{in general not true} that $\frh^{\perp_{\frg}} = \frg \cap (W \otimes W^{\perp})$. It is always true that
\begin{equation} \label{eq:always}
\frh^{\perp_{\frg}} \supseteq \frg \cap (W \otimes W^{\perp})
\end{equation}
because $W \otimes W^{\perp}$ is orthogonal to $\Lambda^2 W \oplus \Lambda^2 W^{\perp}$. This observation motivates the following.

\begin{defn} \label{defn:compliant}
We say that the calibration $\alpha$ is \emph{compliant} if
\begin{equation} \label{eq:compliant}
\frh^{\perp_{\frg}} = \frg \cap (W \otimes W^{\perp})
\end{equation}
where $\frg$ and $\frh$ are as defined above.
\end{defn}

Strictly speaking, the condition of compliancy depends on $W$, because $\frh$ depends on $W$. The next result shows that if $G$ acts transitively on the $\alpha$-calibrated Grassmanian $\mathrm{Gr}_{\alpha}$ (which we are always assuming), then compliancy holds for every $W \in \mathrm{Gr}_{\alpha}$ if and only if it holds for some $W$.

\begin{prop} \label{prop:compliancy-well-defined}
If $\rG$ acts transitively on the $\alpha$-calibrated Grassmanian $\mathrm{Gr}_{\alpha}$ and compliancy holds for some $W_0$, then compliancy holds for every $W$.
\end{prop}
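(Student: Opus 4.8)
The plan is to exploit transitivity to transport the compliancy relation from $W_0$ to an arbitrary calibrated plane $W$ via the adjoint action of a group element carrying $W_0$ to $W$. By transitivity of $\rG$ on $\mathrm{Gr}_{\alpha}$, choose $P \in \rG$ with $W = P W_0$. Since $P \in \SO{n}$, its induced action on $\Lambda^2(\R^n)$ coincides, under the identification $\Lambda^2(\R^n) = \so(n)$, with the adjoint action $\Ad_P \colon X \mapsto P X P^{-1}$; a short computation using orthogonality of $P$ confirms this, and in particular shows that $\Ad_P$ preserves the inner product on $\so(n) = \Lambda^2(\R^n)$ inherited from $\R^n$, i.e.\ $\Ad_P$ is an isometry.

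First I would record the relevant equivariance properties of $\Ad_P$. Because $P \in \rG$, we have $\Ad_P(\frg) = \frg$. Because $\rH_W = P \rH_{W_0} P^{-1}$ (as noted above), differentiating gives $\Ad_P(\frh_{W_0}) = \frh_W$, where $\frh_{W_0}, \frh_W$ denote the Lie algebras of $\rH_{W_0}, \rH_W$. Because $P$ is orthogonal and maps $W_0$ to $W$, it maps $W_0^{\perp}$ to $W^{\perp}$, and hence $\Ad_P$ sends $\Lambda^2 W_0 \oplus \Lambda^2 W_0^{\perp}$ to $\Lambda^2 W \oplus \Lambda^2 W^{\perp}$ and sends $W_0 \otimes W_0^{\perp}$ to $W \otimes W^{\perp}$, respecting the orthogonal decomposition~\eqref{eq:W-decomp} for $W_0$ and for $W$.

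The proof then proceeds by applying $\Ad_P$ to both sides of the compliancy relation $\frh_{W_0}^{\perp_{\frg}} = \frg \cap (W_0 \otimes W_0^{\perp})$ for $W_0$. On the right, since $\Ad_P$ is a linear bijection preserving $\frg$ and carrying $W_0 \otimes W_0^{\perp}$ to $W \otimes W^{\perp}$, we obtain $\Ad_P\big(\frg \cap (W_0 \otimes W_0^{\perp})\big) = \frg \cap (W \otimes W^{\perp})$. On the left, the orthogonal complement of $\frh_{W_0}$ inside $\frg$ is carried to the orthogonal complement of $\Ad_P(\frh_{W_0}) = \frh_W$ inside $\Ad_P(\frg) = \frg$; that is, $\Ad_P(\frh_{W_0}^{\perp_{\frg}}) = \frh_W^{\perp_{\frg}}$. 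Equating the two images yields $\frh_W^{\perp_{\frg}} = \frg \cap (W \otimes W^{\perp})$, which is compliancy for $W$. Since every $W \in \mathrm{Gr}_{\alpha}$ is of the form $P W_0$ with $P \in \rG$, this gives the result.

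The only point requiring genuine care — and the main, if modest, obstacle — is the claim on the left-hand side that $\Ad_P$ carries orthogonal complements taken \emph{within} $\frg$ to orthogonal complements taken within $\frg$. This is exactly where the isometry property is essential: a linear isometry that preserves the ambient subspace $\frg$ preserves orthogonality within $\frg$, so $\Ad_P$ maps $\frh_{W_0}^{\perp_{\frg}}$ isometrically onto $\frh_W^{\perp_{\frg}}$. I would therefore make sure to state explicitly that $\Ad_P$ preserves the inner product on $\so(n)$, since without this the complement on the left would not transform correctly and the argument would break down.
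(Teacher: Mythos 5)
Your proposal is correct and follows essentially the same route as the paper: pick $P \in \rG$ with $W = P W_0$ and transport the compliancy relation by conjugation, using $\Ad_P(\frg) = \frg$, $\Ad_P(\frh_{W_0}) = \frh_W$, and $\Ad_P(W_0 \otimes W_0^{\perp}) = W \otimes W^{\perp}$. The only cosmetic difference is that you identify the image of $\frh_{W_0}^{\perp_{\frg}}$ via the isometry property of $\Ad_P$, whereas the paper transports the direct sum decomposition $\frg = \frh_{W_0} \oplus \big(\frg \cap (W_0 \otimes W_0^{\perp})\big)$ and then invokes the a priori orthogonality~\eqref{eq:always} to recognize the second summand as $\frh_W^{\perp_{\frg}}$; both steps are valid.
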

\begin{proof}
Let $P \in G$ be such that $W = P W_0$. Then
$$ \SO{W} \times \SO{W^\perp} = P(\SO{W_0} \times \SO{W^\perp_0}) P^{-1}, \qquad W \otimes W^{\perp} = P( W_0 \otimes W_0^{\perp}) P^{-1}. $$
The first equality says $\rH_W = P \rH_{W_0} P^{-1}$ and thus $\frh_W = P \frh_{W_0} P^{-1}$. Taking both observations into account, combined with the equality $\frg = P \frg P^{-1}$, we obtain
$$ \frg = P \big( \frh_{W_0} \oplus \frg \cap (W_0 \otimes W_0^\perp) \big) P^{-1} = \frh_{W} \oplus \big( \frg \cap (W \otimes W^\perp) \big). $$
Hence, we conclude that $\frh_{W}^{\perp_\frg} = \frg \cap (W \otimes W^\perp)$.
\end{proof}

\begin{rmk} \label{rmk:complicancy-Lie-theoretic}
In fact, the compliancy condition is strictly Lie-theoretic, so it can be phrased for pairs of Lie group representations, without reference to calibrations. That is, given a $\rG$-representation on $\R^n$ and a subgroup $\rH$ of $\rG$, we can say that an $\rH$-representation $W \subset \R^n$ is compliant if~\eqref{eq:compliant} holds. A calibration $\alpha$ whose stabilizer $\rG$ acts transitively on the Grassmannian $\rG/\rH$ of $k$-planes is compliant provided that the $\rH$-representation on a calibrated plane is compliant.
\end{rmk}

In the remainder of this section, we investigate the compliancy condition. First we need to introduce some notation. From the splitting $\R^n = W \oplus W^{\perp}$, we obtain
$$ \Lambda^k (\R^n) = \Lambda^k (W \oplus W^{\perp}) = \bigoplus_{p+q=k} (\Lambda^p W) \otimes (\Lambda^{k-p} W^{\perp}). $$
Elements of $(\Lambda^p W) \otimes (\Lambda^q W^{\perp})$ are said to be of type $(p,q)$ with respect to $W$. Thus for any $\beta \in \Lambda^k (\R^n)$ and any $k$-plane $W \subset \R^n$, we can uniquely decompose
$$ \beta = \sum_{p+q = k} \beta^{p,q,W} \qquad \text{ where $\beta^{p,q,W}$ is of type $(p, q)$ with respect to $W$.} $$
If the $k$-plane $W$ is understood, we often abbreviate $\beta^{p,q,W}$ to $\beta^{p,q}$. Note that
\begin{equation} \label{eq:k0-calibrated}
\text{an oriented $k$-plane $W$ is $\alpha$-calibrated \,} \iff \, \alpha^{k,0,W} = \vol_W.
\end{equation}

\begin{prop} \label{prop:even-normals}
Let $\rG$ be as in~\eqref{eq:G-defn} with $\alpha = \alpha_0$ a calibration. Suppose that for any oriented $k$-plane $W$ which is $\alpha$-\emph{calibrated}, the following condition holds. For each $0 \leq j \leq N$, we have that either:
\begin{equation} \label{eq:even-normals}
\begin{aligned}
\alpha_j & \in \bigoplus_{0 \leq 2r \leq k} (\Lambda^{k-2r} W) \otimes (\Lambda^{2r} W^{\perp}), \quad \text{or} \\
\alpha_j & \in \bigoplus_{0 \leq 2r + 1 \leq k} (\Lambda^{k-(2r+1)} W) \otimes (\Lambda^{2r+1} W^{\perp}).
\end{aligned}
\end{equation}
That is, with respect to $W$, each term in the $(p,q)$ decomposition of $\alpha_j$ has the same parity of $q$. Note that by~\eqref{eq:k0-calibrated} only the first (even) case can occur for $\alpha = \alpha_0$.

Then $\alpha$ is compliant.
\end{prop}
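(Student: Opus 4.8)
The plan is to reduce compliancy to a single structural statement: that every $X \in \frg$ splits, \emph{within $\frg$}, into its block-diagonal and off-diagonal parts relative to~\eqref{eq:W-decomp}. Writing $X = X^{(0)} + X^{(1)}$ with $X^{(0)} \in \Lambda^2 W \oplus \Lambda^2 W^{\perp}$ and $X^{(1)} \in W \otimes W^{\perp}$, these two summands are orthogonal in $\Lambda^2(\R^n)$. Thus if one can show that $X^{(0)} \in \frg$ and $X^{(1)} \in \frg$ for \emph{every} $X \in \frg$, then $X^{(0)} \in \frg \cap (\Lambda^2 W \oplus \Lambda^2 W^{\perp}) = \frh$ and $X^{(1)} \in \frg \cap (W \otimes W^{\perp})$, yielding the orthogonal splitting $\frg = \frh \oplus \big( \frg \cap (W \otimes W^{\perp}) \big)$. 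Comparing with the orthogonal splitting $\frg = \frh \oplus \frh^{\perp_{\frg}}$ and using uniqueness of the orthogonal complement of $\frh$ in $\frg$ then gives $\frh^{\perp_{\frg}} = \frg \cap (W \otimes W^{\perp})$, which is exactly~\eqref{eq:compliant}. So the entire proof reduces to showing that the defining condition $X \diamond \alpha_j = 0$ is inherited separately by $X^{(0)}$ and $X^{(1)}$.

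The mechanism for this separation is a parity argument keyed to how the $\diamond$-action interacts with the type grading $\Lambda^k(\R^n) = \bigoplus_{p+q=k} \Lambda^p W \otimes \Lambda^q W^{\perp}$. I would fix an adapted orthonormal basis $e_1, \dots, e_k$ of $W$ and $e_{k+1}, \dots, e_n$ of $W^{\perp}$, and compute the effect of each block on a form of type $(p,q)$ using $X \diamond \beta = X_{ij}\, e_i \w (e_j \hk \beta)$. A direct check shows that a block-diagonal endomorphism $X^{(0)} \in \Lambda^2 W \oplus \Lambda^2 W^{\perp}$ \emph{preserves} the type $(p,q)$, since it contracts and wedges using vectors from a single factor, whereas an off-diagonal endomorphism $X^{(1)} \in W \otimes W^{\perp}$ sends type $(p,q)$ into type $(p+1, q-1) \oplus (p-1, q+1)$. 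In particular, $X^{(0)} \diamond$ preserves the parity of $q$ while $X^{(1)} \diamond$ reverses it. This computation is the technical heart of the argument, and is the step I expect to require the most care with the sign and index conventions.

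With this in hand, the hypothesis~\eqref{eq:even-normals} finishes the argument. Each $\alpha_j$ is by assumption homogeneous in $q$-parity, so $X^{(0)} \diamond \alpha_j$ lies in the $q$-parity class of $\alpha_j$ while $X^{(1)} \diamond \alpha_j$ lies in the opposite $q$-parity class. Since the even-$q$ and odd-$q$ graded subspaces of $\Lambda^k(\R^n)$ intersect trivially, the equation
$$ 0 = X \diamond \alpha_j = X^{(0)} \diamond \alpha_j + X^{(1)} \diamond \alpha_j $$
forces $X^{(0)} \diamond \alpha_j = 0$ and $X^{(1)} \diamond \alpha_j = 0$ individually, for every $0 \leq j \leq N$. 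Hence $X^{(0)}, X^{(1)} \in \frg$ by the characterization~\eqref{eq:gh-defn} of $\frg$, completing the reduction above and proving compliancy for the given $W$. As the hypothesis is assumed for every $\alpha$-calibrated $W$, the conclusion holds for every $W$ (consistently with Proposition~\ref{prop:compliancy-well-defined}).
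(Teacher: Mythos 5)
Your proposal is correct and uses essentially the same argument as the paper: the key step in both is that a block-diagonal element of $\so(n)$ preserves the $(p,q)$-type under $\diamond$ while an off-diagonal element shifts it by $(\pm 1, \mp 1)$, so the parity hypothesis~\eqref{eq:even-normals} forces $X^{(0)} \diamond \alpha_j$ and $X^{(1)} \diamond \alpha_j$ to vanish separately. The only (immaterial) difference is in the final bookkeeping: you decompose a general $X \in \frg$ and invoke uniqueness of the orthogonal complement, whereas the paper starts with $X \in \frh^{\perp_{\frg}}$ and concludes that its block-diagonal part is zero.
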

\begin{proof}
Let $X \in \frh^{\perp_{\frg}} \subseteq \frg \subseteq \so(n)$, and write $X = X_1 + X_2$ with $X_1 \in \Lambda^2 W \oplus \Lambda^2 W^{\perp}$ and $X_2 \in W \otimes W^{\perp}$ with respect to the decomposition~\eqref{eq:W-decomp}. Thus the action of $X_1$ preserves both $W$ and $W^{\perp}$, while $X_2$ swaps them. It follows that if $\alpha^{p,q}_j \in (\Lambda^p W) \otimes (\Lambda^q W^{\perp})$, then the Lie algebra action $X_i \diamond \alpha_j$ satisfies
\begin{align*}
X_1 \diamond \alpha^{p,q}_j & \in (\Lambda^p W) \otimes (\Lambda^q W^{\perp}), \\
X_2 \diamond \alpha^{p,q}_j & \in (\Lambda^{p-1} W) \otimes (\Lambda^{q+1} W^{\perp}) \, \oplus \, (\Lambda^{p+1} W) \otimes (\Lambda^{q-1} W^{\perp}).
\end{align*}
From these equations and the hypothesis~\eqref{eq:even-normals} we deduce that the two forms $X_1 \diamond \alpha_j$ and $X_2 \diamond \alpha_j$ lie in orthogonal spaces. Since $X \in \frg$, by~\eqref{eq:gh-defn} we have
$$ 0 = X \diamond \alpha_j = X_1 \diamond \alpha_j + X_2 \diamond \alpha_j. $$
Therefore, $X_1 \diamond \alpha_j = 0$ and $X_2 \diamond \alpha_j = 0$ for all $0 \leq j \leq N$. From~\eqref{eq:gh-defn} we obtain $X_1 \in \frh$ and $X_2 \in \frg$. Since $X \in \frh^{\perp_{\frg}}$, we get $0 = \langle X, X_1 \rangle = \|X_1\|^2$ so $X_1 = 0$ and thus $X = X_2 \in \frg \cap (W \otimes W^{\perp})$. We have shown that
$$ \frh^{\perp_{\frg}} \subseteq \frg \cap (W \otimes W^{\perp}). $$
Since~\eqref{eq:always} always holds, we conclude that~\eqref{eq:compliant} holds, so $\alpha$ is compliant.
\end{proof}

\begin{defn} \label{defn:involution}
Fix $W \in \mathrm{Gr}_{\alpha}$. Define the map
\begin{equation} \label{eq:involution}
\phi = \Id_W \times (-\Id_{W^\perp}),
\end{equation}
which is an involution and lies in $\OO{n}$.
\end{defn}

In general, $\phi \notin \SO{n}$, so it does not make sense to ask if $\phi \in \rG$. However, if $n-k$ is even and $\phi^* \alpha_j = \alpha_j$ for all $0 \leq j \leq N$, then $\phi \in \rG$. Similarly if $k$ is even and $(-\phi)^* \alpha_j = \alpha_j$ for all $0 \leq j \leq N$, then $- \phi \in \rG$. These observations motivate us to define the following ``enhancement'' of $\rG$.

\begin{defn} \label{defn:hatG}
Define the subgroup $\wG \subseteq \OO{n}$ to be the stabilizer (up to sign) in $\OO{n}$ of $\alpha_0, \ldots, \alpha_N$. That is,
\begin{equation} \label{eq:hatG-defn}
\wG =  \{ P \in \OO{n} : P^* \alpha_j = \pm \alpha_j, \, \text{for $0 \leq j \leq N$}\}.
\end{equation}
Of course, $\rG \subseteq \wG$ and the Lie algebra of $\wG$ coincides with $\frg$. 
\end{defn}

\begin{prop} \label{prop:even-normals-involution}
Suppose that $\phi \in \wG$. Then $\alpha$ is compliant.
\end{prop}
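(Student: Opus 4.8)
The plan is to deduce the statement directly from Proposition~\ref{prop:even-normals}, by showing that the hypothesis $\phi \in \wG$ is precisely what forces the parity condition~\eqref{eq:even-normals} on each $\alpha_j$. In other words, the entire proof is a translation of the equation ``$\phi^* \alpha_j = \pm \alpha_j$'' into the statement ``every term in the $(p,q)$ decomposition of $\alpha_j$ has the same parity of $q$''.

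First I would compute the action of the involution on the type decomposition with respect to $W$. Since $\phi = \Id_W \times (-\Id_{W^\perp})$ preserves both $W$ and $W^\perp$, its pullback $\phi^*$ preserves the splitting $\Lambda^k(\R^n) = \bigoplus_{p+q=k} (\Lambda^p W)\otimes(\Lambda^q W^\perp)$. On a covector lying in $W$ the map $\phi^*$ acts as $+1$, while on a covector lying in $W^\perp$ it acts as $-1$; hence on $(\Lambda^p W)\otimes(\Lambda^q W^\perp)$ it acts as multiplication by $(-1)^q$. Therefore for each $0 \le j \le N$,
\[ \phi^* \alpha_j = \sum_{p+q=k} (-1)^q \, \alpha_j^{p,q,W}. \]

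Next I would invoke the hypothesis. Because $\phi \in \wG$, by~\eqref{eq:hatG-defn} there is a sign $\eps_j \in \{ +1, -1 \}$ with $\phi^* \alpha_j = \eps_j \alpha_j$. Matching the two type decompositions termwise gives $(-1)^q \alpha_j^{p,q,W} = \eps_j \, \alpha_j^{p,q,W}$ for every $(p,q)$, so $\alpha_j^{p,q,W} = 0$ whenever $(-1)^q \neq \eps_j$. Thus if $\eps_j = +1$ only the terms with $q$ even survive, while if $\eps_j = -1$ only the terms with $q$ odd survive; in either case every nonzero component of $\alpha_j$ has the same parity of $q$. This is precisely the dichotomy~\eqref{eq:even-normals} for the plane $W$ fixed in Definition~\ref{defn:involution}.

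Finally I would apply Proposition~\ref{prop:even-normals}: its proof uses only the parity condition for the single plane $W$ under consideration, so we obtain $\frh^{\perp_{\frg}} \subseteq \frg \cap (W \otimes W^\perp)$, which combined with~\eqref{eq:always} is compliancy for $W$; by Proposition~\ref{prop:compliancy-well-defined} the calibration $\alpha$ is then compliant. I expect no serious obstacle here; the only point requiring care is the eigenvalue bookkeeping, namely the observation that a single global sign $\eps_j$ cannot accommodate components of both $q$-parities. This is exactly the spectral statement that $\phi^*$ is an involution whose $\pm 1$ eigenspaces are indexed by the parity of $q$, and it is what makes the hypothesis $\phi \in \wG$ so directly equivalent to the parity condition.
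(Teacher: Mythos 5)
Your proposal is correct and follows exactly the paper's own argument: compute that $\phi^*$ acts as $(-1)^q$ on the type-$(p,q)$ component, observe that $\phi^*\alpha_j = \pm\alpha_j$ is therefore equivalent to the parity condition~\eqref{eq:even-normals}, and invoke Proposition~\ref{prop:even-normals}. Your extra care about the hypothesis of Proposition~\ref{prop:even-normals} being stated for all calibrated planes (handled via Proposition~\ref{prop:compliancy-well-defined}) is a reasonable tidying of a detail the paper glosses over, but the route is the same.
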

\begin{proof}
If $\alpha_j = \sum_{r=0}^k \alpha^{k-r,r}_j$, then $\phi^*\alpha_j = \sum_{r=0}^k (-1)^r\alpha^{k-r,r}_j$. Thus $\phi^* \alpha_j = \pm \alpha_j$ if and only if~\eqref{eq:even-normals} holds for $\alpha_j$. The result now follows from Proposition~\ref{prop:even-normals}.
\end{proof}

In fact, the condition of compliancy can be completely characterized in terms of this involution $\phi$. For that purpose, observe that the map $\Ad(\phi) \in \Aut(\so(n))$ given by $\Ad(\phi)(X) = \phi X \phi^{-1}$ is an involution and that its $\pm 1$-eigenspaces $\so(n)_{\pm 1}$ are
\begin{equation} \label{eq:sonpm}
\so(n)_1= \Lambda^2 W \oplus \Lambda^2 W^\perp, \quad \so(n)_{-1}= W \otimes W^\perp.
\end{equation}
Using~\eqref{eq:gh-defn} and Definition~\ref{defn:compliant}, we have
\begin{equation} \label{eq:compliant-involution}
\text{$\frh = \frg \cap \so(n)_1$ always holds, \quad and $\alpha$ is compliant if and only if $\frh^{\perp_{\frg}}= \frg \cap \so(n)_{-1}$.}
\end{equation}

\begin{thm} \label{thm:compliant-involution}
Let $W \in \mathrm{Gr}_{\alpha}$, and let $\phi$ be the involution~\eqref{eq:involution} associated to $W$. Then $\alpha$ is compliant (with respect to $W$) if and only if $\Ad(\phi)(\frg)= \frg$. That is, if and only if $\Ad(\phi) \in \Aut(\frg)$.
\end{thm}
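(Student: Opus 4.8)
The plan is to exploit the eigenspace picture already set up in \eqref{eq:sonpm} and \eqref{eq:compliant-involution}. The map $\Ad(\phi)$ is conjugation by $\phi \in \OO{n}$, so it preserves skew-symmetry and is therefore a linear involution of $\so(n)$; moreover, since $\phi$ is orthogonal it preserves the trace form and is thus an \emph{orthogonal} involution, whose $\pm 1$-eigenspaces $\so(n)_1 = \Lambda^2 W \oplus \Lambda^2 W^\perp$ and $\so(n)_{-1} = W \otimes W^\perp$ are mutually orthogonal. The key recorded fact in \eqref{eq:compliant-involution} is that $\frh = \frg \cap \so(n)_1$ holds unconditionally and that compliancy is precisely the statement $\frh^{\perp_{\frg}} = \frg \cap \so(n)_{-1}$. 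So the theorem reduces to showing that $\frg$ is $\Ad(\phi)$-invariant if and only if $\frg$ splits as the orthogonal sum of its intersections with the two eigenspaces.

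For the direction ($\Leftarrow$), I would assume $\Ad(\phi)(\frg) = \frg$ and apply the eigenprojectors $\pi_{\pm} = \tfrac12(\Id \pm \Ad(\phi))$. Invariance of $\frg$ means $\pi_\pm$ map $\frg$ into $\frg$, giving for each $X \in \frg$ the decomposition $X = \pi_+ X + \pi_- X$ with $\pi_+ X \in \frg \cap \so(n)_1 = \frh$ and $\pi_- X \in \frg \cap \so(n)_{-1}$. Hence $\frg = \frh \oplus (\frg \cap \so(n)_{-1})$, and because $\so(n)_1 \perp \so(n)_{-1}$ this sum is orthogonal, so the second summand is exactly $\frh^{\perp_{\frg}}$; this is compliancy.

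For ($\Rightarrow$), I would assume compliancy, so that $\frg = \frh \oplus \frh^{\perp_{\frg}} = (\frg \cap \so(n)_1) \oplus (\frg \cap \so(n)_{-1})$. On the first summand $\Ad(\phi)$ acts as $+\Id$ and on the second as $-\Id$, so $\Ad(\phi)$ maps $\frg$ into $\frg$; since $\Ad(\phi)$ is an involution it is invertible, and an invertible map sending $\frg$ into the equidimensional space $\frg$ must send it onto $\frg$, yielding $\Ad(\phi)(\frg) = \frg$. I expect no serious obstacle: the only point requiring care is the orthogonality claim, namely that the decomposition $\frg = (\frg \cap \so(n)_1) \oplus (\frg \cap \so(n)_{-1})$ really is orthogonal so that ``orthogonal complement in $\frg$'' coincides with the eigenspace intersection. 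This is immediate once one checks that $\Ad(\phi)$ preserves the induced inner product on $\so(n)$, which follows from $\phi \in \OO{n}$ via the invariance of the trace form under conjugation.
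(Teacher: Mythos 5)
Your proposal is correct and follows essentially the same route as the paper: both directions rest on the eigenspace decomposition $\so(n) = \so(n)_1 \oplus \so(n)_{-1}$ of $\Ad(\phi)$ from \eqref{eq:sonpm} together with the recorded facts in \eqref{eq:compliant-involution}, with your eigenprojector argument for ($\Leftarrow$) being just a slightly more explicit phrasing of the paper's orthogonal splitting $\frg = \frg_1 \oplus \frg_{-1}$ and the identifications $\frg_1 = \frh$, $\frg_{-1} = \frg \cap \so(n)_{-1}$. No gaps.
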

\begin{proof}
If $\alpha$ is compliant then we have $\frg = \frg\cap \so(n)_1 \oplus \frg \cap \so(n)_{-1}$, and this implies that $\Ad(\phi)(\frg)\subseteq \frg$, since $\Ad(\phi)$ is $\pm 1$ on $\frg \cap \so(n)_{\pm 1}$.

Conversely, suppose that $\Ad(\phi)(\frg)= \frg$. Since $\Ad(\phi)$ is an involution which preserves the inner product of $\so(n)$, we have an orthogonal decomposition
$$ \frg = \frg_1 \oplus \frg_{-1}. $$
We claim that $\frg_1=\frh$. The inclusion $\frh \subseteq \frg_1$ is immediate from~\eqref{eq:gh-defn} and~\eqref{eq:sonpm}, whereas $\frg_1 \subseteq \frg \cap \so(n)_1 = \frh$ from~\eqref{eq:compliant-involution}. It follows that $\frh^{\perp_\frg} = \frg_{-1}$. But $\frg_{-1} \subseteq \frg \cap \so(n)_{-1}$, and by~\eqref{eq:always} we also have $\frg \cap \so(n)_{-1} \subseteq \frh^{\perp_\frg}$. Thus we deduce that $\frh^{\perp_\frg} = \frg \cap \so(n)_{-1}$, so $\alpha$ is compliant.
\end{proof}

Note that since the Lie algebra of $\wG$ is $\frg$, if $\phi \in \wG$ then $\Ad(\phi) \in \Aut(\frg)$. (Indeed $\Ad(\phi)$ is then an inner automorphism of $\frg$.) Hence, by Theorem~\ref{thm:compliant-involution}, $\alpha$ is compliant. This is consistent with Proposition~\ref{prop:even-normals-involution}, which says that $\phi \in \wG$ is sufficient to imply compliancy. However, compliancy is \emph{not} equivalent to the condition that $\phi \in \wG$, as the next example shows.

\begin{ex} \label{ex:involution-in-wG-not-sufficient}
We construct a calibration $\alpha \in \Lambda^3(\R^7)$ as follows. First, consider
$$ \beta = dx^1 \w dx^2 \w dx^3 + dx^4 \w dx^5 \w dx^6  \quad \in \Omega^3(\R^6), $$ 
which is a \emph{double point calibration}. That is, it has two calibrated $3$-planes (see \cite[Theorem 8]{DH}), namely $\frac{\partial}{\partial x^1} \w \frac{\partial}{\partial x^2} \w \frac{\partial}{\partial x^3}$ and $\frac{\partial}{\partial x^4} \w \frac{\partial}{\partial x^5} \w \frac{\partial}{\partial x^6}$. Let $\pi \colon \R^7 \to \R^6$ be the orthogonal projection  and define $\alpha = \pi^*\beta$. We claim that $\alpha$ and $\beta$ have the same calibrated planes. This follows from the fact that given unit-length vectors $v_j$ such that $u_j=\pi(v_j) \neq 0$, we have
$$ \alpha(v_1,v_2,v_3) = \beta(u_1,u_2,u_3) = |u_1||u_2||u_3| \, \beta\left(\frac{u_1}{|u_1|},\frac{u_2}{|u_2|},\frac{u_3}{|u_3|} \right)\leq |u_1||u_2||u_3| \leq 1. $$

Elements of $\rG$ should map calibrated $3$-planes into calibrated $3$-planes. The subgroup of $\SO{7}$ mapping each calibrated plane into itself is 
$\rH=\SO{3} \times \SO{3} \subset \rG$.
In addition, an element of $\SO{7}$ permuting the calibrated planes must be of the form
\begin{equation}
P = \begin{pmatrix} 0& P_2 & 0 \\ P_1 & 0 & 0 \\ 0 & 0 & \sigma \end{pmatrix} \in \rG, \qquad \sigma = \pm 1,
\end{equation}
and thus satisfies
\begin{align*}
P^*(dx^1 \w dx^2 \w dx^3) & = \det(P_1) dx^4 \w dx^5 \w dx^6, \\
 P^*(dx^4 \w dx^5 \w dx^6) & = \det(P_2) dx^1 \w dx^2 \w dx^3, \\
\det(P) & = - \sigma \det(P_1) \det(P_2).
\end{align*}
Therefore, $P \in \rG$ if and only if $P_1,P_2 \in \SO{3}$ and $\sigma = -1$. Let $M$ be the subset of matrices satisfying these conditions, then $\rG = (\SO{3} \times \SO{3}) \cup M$. In particular $\rG$ acts transitively on $\mathrm{Gr}_\alpha$.

Since $\rG$ is disconnected and the connected component of the identity is $\rH$ we have $\frg= \frh$. Therefore, by Definition~\ref{defn:compliant}, $\alpha$ is compliant. For the calibrated $3$-plane $W = \frac{\partial}{\partial x^1} \w \frac{\partial}{\partial x^2} \w \frac{\partial}{\partial x^3}$, we have
$$ \alpha \notin \bigoplus_{0 \leq 2r \leq 3} (\Lambda^{3-2r} W)  \otimes (\Lambda^{2r} W^{\perp}). $$
This shows, that having an even number of normal vectors which, by the proof of Proposition~\ref{prop:even-normals-involution}, is equivalent to $\phi \in \wG$, is in general a stronger condition than compliancy.
\end{ex}

Finally, we relate the notion of compliancy of $\alpha$ to the geometry of the calibrated Grassmanian $\mathrm{Gr}_{\alpha}$.

\begin{prop} \label{prop:symmetric-space}
As usual, assume that $\rG$ acts transitively on the $\alpha$-calibrated Grassmanian $\mathrm{Gr}_\alpha = \rG/\rH$. Suppose $\alpha$ is compliant. If $\rG$ is connected, then $\mathrm{Gr}_\alpha = \rG/\rH$ is locally symmetric. If, furthermore, $\mathrm{Gr}_\alpha$ is simply-connected, then it is symmetric.
\end{prop}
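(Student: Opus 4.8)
The plan is to use compliancy to manufacture an involutive automorphism of $\frg$ whose eigenspace splitting exhibits $(\frg,\frh)$ as a symmetric pair, and then to feed this into the standard correspondence between symmetric pairs and (locally) symmetric spaces.

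First I would take the involution $\phi$ of Definition~\ref{defn:involution} attached to $W$ and set $\sigma = \Ad(\phi)|_{\frg}$. Since $\alpha$ is compliant, Theorem~\ref{thm:compliant-involution} gives $\Ad(\phi)(\frg) = \frg$, so $\sigma$ is a genuine involutive automorphism of $\frg$. Its $\pm 1$-eigenspaces are $\frg_{\pm 1} = \frg \cap \so(n)_{\pm 1}$, and by~\eqref{eq:compliant-involution} these are exactly $\frg_1 = \frh$ and $\frg_{-1} = \frh^{\perp_{\frg}} = \frg \cap (W \otimes W^{\perp})$; write $\frm := \frg_{-1}$. Because every element of $\rH$ fixes both $W$ and $W^{\perp}$ it commutes with $\phi$, so $\Ad(\rH)$ commutes with $\sigma$ and therefore preserves $\frm$; hence $\frg = \frh \oplus \frm$ is a reductive decomposition. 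Finally, since $\sigma$ is an algebra automorphism acting as $+1$ on $\frh$ and $-1$ on $\frm$, for $X, Y \in \frm$ we get $\sigma[X,Y] = [\sigma X, \sigma Y] = [X,Y]$, so $[\frm, \frm] \subseteq \frh$; together with $[\frh, \frm] \subseteq \frm$ this exhibits $(\frg, \frh, \sigma)$ as a symmetric pair.

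Next I would equip $\mathrm{Gr}_{\alpha} = \rG/\rH$ with the $\rG$-invariant Riemannian metric induced by the restriction to $\frm$ of the inner product of $\so(n) = \Lambda^2(\R^n)$; this restriction is $\Ad(\rH)$-invariant because $\rH \subseteq \SO{n}$ acts isometrically on $\so(n)$ and preserves $\frm$. With $\rG$ connected, the relation $[\frm,\frm] \subseteq \frh$ is precisely the condition ensuring that the canonical (Ambrose--Singer) connection of the reductive decomposition $\frg = \frh \oplus \frm$ has vanishing torsion, hence coincides with the Levi-Civita connection of this metric; since the curvature of the canonical connection is always parallel with respect to it, we conclude $\nabla R = 0$. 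Thus $\rG/\rH$ is a Riemannian locally symmetric space. (Equivalently, one simply invokes the standard correspondence: a symmetric pair with $\rG$ connected together with an invariant metric yields a locally symmetric space.) For the final assertion, I would note that $\rG$ is compact, being a closed subgroup of $\SO{n}$, so $\rG/\rH$ is compact and therefore complete; a complete, simply-connected, Riemannian locally symmetric space is globally symmetric, so if $\mathrm{Gr}_{\alpha}$ is simply-connected it is symmetric.

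The part requiring the most care is the translation of the purely algebraic symmetric-pair data into the Riemannian statement $\nabla R = 0$, i.e.\ checking that $[\frm,\frm]\subseteq\frh$ forces the canonical connection to be torsion-free and hence equal to the Levi-Civita connection of the invariant metric. Everything upstream of this is essentially handed to us by Theorem~\ref{thm:compliant-involution} and~\eqref{eq:compliant-involution}. One subtlety worth flagging is that $\sigma$ is produced only as an automorphism of $\frg$, not a priori of $\rG$; the local-to-global theorem lets us avoid integrating $\sigma$, and the connectedness of $\rG$ is exactly what guarantees the homogeneous-space machinery applies. (In fact, since $\rG$ is connected, $\sigma$ is realized by the group automorphism $g \mapsto \phi g \phi^{-1}$, which fixes $\rH$ pointwise and descends to the geodesic symmetry at $W$, so one could even obtain global symmetry without the simple-connectedness hypothesis; but the statement as given follows most cleanly via the completeness argument above.)
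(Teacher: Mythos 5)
Your proof is correct and follows essentially the same route as the paper: compliancy (via Theorem~\ref{thm:compliant-involution}) turns $\Ad(\phi)$ into an involutive automorphism of $\frg$ making $(\frg,\frh,\Ad(\phi))$ a symmetric pair, the canonical connection of the reductive decomposition is then torsion-free and coincides with the Levi-Civita connection of the invariant metric with parallel curvature, and completeness (from compactness of $\rG$) plus simple-connectedness upgrades locally symmetric to symmetric. The only difference is cosmetic: you derive the symmetric-pair identities ($[\frm,\frm]\subseteq\frh$, $\Ad(\rH)$-invariance of $\frm$) by hand, where the paper delegates them to Kobayashi--Nomizu.
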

\begin{proof}
The Riemannian metric on the homogeneous space $\mathrm{Gr}_\alpha=\rG/\rH$ is induced from the bi-invariant metric on $\rG \subseteq \SO{n}$. Since this metric is $\Ad(\rG)$-invariant, for $X \in \frh$, $Y \in \frh^{\perp_g}$, and $P \in \rH$ we have $\Ad(P^{-1}) X \in \frh$, so
$$ \langle X, \Ad(P) Y \rangle = \langle \Ad(P^{-1}) X, Y \rangle = 0. $$
Thus $\frh^{\perp_g}$ is also $\Ad(H)$-invariant, and hence $\rG/\rH$ is \emph{reductive}, because $\frg =\frh \oplus \frh^{\perp_g}$.

The triple $(\frg,\frh, \Ad(\phi))$ is a symmetric Lie algebra in the sense of \cite[page 226]{KN} because $\alpha$ is compliant.
It follows from~\cite[Theorem 2.6 in Chapter X and Proposition 2.2 in Chapter XI]{KN} that the canonical connection on $\rG/\rH$ coincides with its Levi-Civita connection, and that the curvature is parallel.

Therefore,~\cite[Chapter XI, Theorem 6.2]{KN} shows that $\rG/\rH$ is locally symmetric. In addition, $\rG/\rH$ is complete because $\rG$ is compact. Consequently, \cite[Chapter XI, Theorems 6.2 and 6.3]{KN} imply that $\rG/\rH$ is a locally symmetric space, and it is a symmetric space if it is simply-connected.
\end{proof}

\begin{rmk} \label{rmk:Gr-locally-symmetric}
The hypothesis that $\rG$ is connected is taken throughout~\cite[Chapters X and XI]{KN}, and always holds for the interesting geometric examples. We are unaware if the converse of Proposition~\ref{prop:symmetric-space} holds. That is, it is unclear if $\mathrm{Gr}_{\alpha}$ being locally symmetric is equivalent to compliancy.
\end{rmk}

\begin{rmk} \label{rmk:Gr-simply-connected}
If $\rG$ is simply-connected and $\rH$ is connected, then the long exact sequence of homotopy groups for the fibration $\rH \hookrightarrow \rG \twoheadrightarrow \rG/\rH$ shows that $\mathrm{Gr}_\alpha$ is simply-connected. This condition holds in all the cases we are interested in, namely those in Table~\ref{table:GH}. It is of course well-known that all of these $\alpha$-calibrated Grassmanians $\mathrm{Gr}_{\alpha} = \rG/\rH$ are symmetric spaces.
\end{rmk}

\subsection{Compliancy of the important geometric calibrations} \label{sec:compliancy-examples}

We verify that the interesting geometric calibrations, namely those of Table~\ref{table:GH}, are all \emph{compliant}. By Proposition~\ref{prop:compliancy-well-defined}, it is enough to show compliancy with respect to a particular calibrated plane $W$. In each case we do this by showing that Proposition~\ref{prop:even-normals} applies. For illustration, we also show directly that the involution $\phi \in \OO{n}$ actually lies in $\wG$ in each case, as it must by the proof of Proposition~\ref{prop:even-normals-involution}.

In the following examples, $W$ is an $\alpha$-calibrated plane, with orthogonal complement $W^{\perp}$. Let $e_1, \ldots, e_k$ be an oriented orthonormal basis for $W$ and $\nu_1, \ldots, \nu_{n-k}$ be an oriented orthonormal basis for $W^{\perp}$.

\begin{ex} \label{ex:Kahler-compliant}
Consider the K\"ahler calibration $\alpha = \frac{1}{p!} \omega^p$ of Example~\ref{ex:Kahler}. The $2p$-plane $W$ spanned by $e_{2j-1} = dx^j, e^{2j} = dy^j$, for $1 \leq j \leq p$ is $\alpha$-calibrated, with $W^{\perp}$ spanned by $\nu^{2j-1} = dx^{p+j}, \nu^{2j} = dy^{p+j}$, for $1 \leq j \leq m-p$. Thus we have
\begin{equation} \label{eq:complex-forms}
\omega = \sum_{j=1}^p e^{2j-1} \w e^{2j} + \sum_{j=1}^{m-p} \nu^{2j-1} \w \nu^{2j} \, \in \Lambda^2 W \oplus \Lambda^2 W^{\perp}.
\end{equation}
It follows that $\alpha = \frac{1}{p!} \omega^p \in \oplus_{j = 0}^p (\Lambda^{2p-2j} W) \otimes (\Lambda^{2j} W^{\perp})$, so $\alpha$ is compliant. In this case it is clear that $\phi^* \omega = \omega$, so $\phi \in \wG$.
\end{ex}

\begin{ex} \label{ex:slag-compliant}
Consider the special Lagrangian calibration $\alpha = \real(\Upsilon)$ of Example~\ref{ex:slag}. The $m$-plane $W$ spanned by $e^j = dx^j$, for $1 \leq j \leq m$ is $\alpha$-calibrated, with $W^{\perp}$ spanned by $\nu^j = dy^j$, for $1 \leq j \leq m$. Thus we have
\begin{equation} \label{eq:SLAG-forms}
\Upsilon = (e^1 + i \nu^1) \w \cdots \w (e^m + i \nu^m), \qquad \omega = \sum_{j=1}^m e^j \w \nu^j.
\end{equation}
Then $\real(\Upsilon)$ consists of terms that involve an even number of the $\nu^j$, and $\imag(\Upsilon)$ consists of terms that involve an odd number of the $\nu^j$. Moreover, $\omega$ is purely type $(1,1)$. Thus Proposition~\ref{prop:even-normals} applies, so $\alpha$ is compliant. In this case, it is clear that $\phi$ corresponds to complex conjugation, so $\phi^* \Upsilon = \ol{\Upsilon}$. Thus $\phi^* \omega = \omega$, $\phi^* \real(\Upsilon) = \real(\Upsilon)$, and $\phi^* \imag(\Upsilon) = - \imag(\Upsilon)$, so $\phi \in \wG$. This example shows why we needed to define $\wG$ in~\eqref{eq:hatG-defn} by $\phi^* \alpha_j = \pm \alpha_j$, rather than just $\phi^* \alpha_j = \alpha_j$. Note also that $\det \phi = (-1)^m$, so it is not always true that $\phi \in \SO{2m}$. Thus this example also shows why we needed to define $\wG$ in~\eqref{eq:hatG-defn} as a subgroup of $\OO{n}$ rather than $\SO{n}$.
\end{ex}

\begin{ex} \label{ex:assoc-compliant}
Consider the associative calibration $\alpha = \ph$ of Example~\ref{ex:assoc}. The $3$-plane $W$ spanned by $e^j = dx^j$, for $1 \leq j \leq 3$ is $\alpha$-calibrated, with $W^{\perp}$ spanned by $\nu^j = dx^{3+j}$, for $1 \leq j \leq 4$. We see from~\eqref{eq:assoc-form} that $\alpha$ is of type $(3,0) + (1,2)$. Thus $\alpha$ is compliant. It is clear that $\phi^* \ph = \ph$, so $\phi \in \G \subset \wG$.
\end{ex}

\begin{ex} \label{ex:coassoc-compliant}
Consider the coassociative calibration $\alpha = \ps$ of Example~\ref{ex:coassoc}. The $4$-plane $W$ spanned by $e^j = dx^{3+j}$, for $1 \leq j \leq 4$ is $\alpha$-calibrated, with $W^{\perp}$ spanned by $\nu^j = dx^j$, for $1 \leq j \leq 3$. We see from~\eqref{eq:coassoc-form} that $\alpha$ is of type $(4,0) + (2,2)$. Thus $\alpha$ is compliant. It is clear that $\phi^* \ps = \ps$, but $\phi^* \ph = - \ph$, so $\phi \in \wG$ although $\phi$ is not in $\SO{7}$.
\end{ex}

\begin{ex} \label{ex:Cayley-compliant}
Consider the Cayley calibration $\alpha = \Ph$ of Example~\ref{ex:Cayley}. The $4$-plane $W$ spanned by $e^j = dx^j$, for $1 \leq j \leq 4$ is $\alpha$-calibrated, with $W^{\perp}$ spanned by $\nu^j = dx^{4+j}$, for $1 \leq j \leq 4$. We see from~\eqref{eq:Cayley-form} that $\alpha$ is of type $(4,0) + (2,2) + (0,4)$. Thus $\alpha$ is compliant. It is clear that $\phi^* \Ph = \Ph$, so $\Ph \in \Spin{7} \subset \wG$.
\end{ex}

\section{Extrinsic geometry of calibrated submanifolds} \label{sec:extrinsic-calibrated}

We review the classical Gauss--Codazzi--Ricci relations between intrinsic and extrinsic geometry of a Riemannian immersion, from a more modern point of view than that which is traditionally presented. This serves to both set notation and to keep the paper reasonably self-contained. We then discuss calibrated submanifolds, and prove our main theorem which characterizes the extrinsic geometry of a calibrated submanifold, when the calibration is compliant. We examine this theorem for the interesting geometric examples, and use the result to motivate the definition of an \emph{infinitesimally calibrated} submanifold.

\subsection{Extrinsic geometry of Riemannian immersions} \label{sec:extrinsic-review}

Let $(M^n, g)$ be a Riemannian manifold. Let $\ol{\nabla}$ denote the Levi-Civita connection on $TM$ induced by $g$. Let $\iota \colon L \to M$ be an immersion of a $k$-dimensional manifold $L$ into $M$. The results in this paper are all local, so we can safely think of $L$ as an embedded submanifold and $\iota$ as the inclusion. Equipping $L$ with the pullback (restricted) metric $\iota^* g = \rest{g}{L}$, the map $\iota \colon (L, \rest{g}{L}) \to (M, g)$ becomes a \emph{Riemannian} (or \emph{isometric}) immersion.

The restriction of $TM$ to $L$, which is the pullback bundle by the map $\iota \colon L \to M$, will be denoted
$$ E : = \rest{TM}{L} = \iota^* (TM). $$
Because $\iota \colon L \to M$ is an immersion, it induces a bundle map $\iota_* \colon TL \to E$ which exhibits $TL$ as a smooth subbundle of $E$. Explicitly, $T_x L$ is identified with its image $\iota_* (T_x L) \subseteq T_x M$. Note that $g$ is a fibre metric on $TM$, so $\iota^* g$ is a fibre metric on $E = \rest{TM}{L}$. We therefore have an orthogonal direct sum decomposition
\begin{equation} \label{eq:splitting}
E = \rest{TM}{L} = TL \oplus NL,
\end{equation}
where $TL$ is the tangent bundle of $L$ and $NL$ is the \emph{normal} bundle of $L$, defined to be the orthogonal complement of $TL$ in $E$ with respect to the fibre metric $\iota^* g$. The vector bundles $TL$ and $NL$ are thus both equipped with fibre metrics, and are of ranks $k$ and $n-k$, respectively.

We use angle brackets $\langle \cdot, \cdot \rangle$ to denote the fibre metrics on $E$, $TL$, and $NL$. There should be no possibility of confusion. We also use $u, v, w, y$ to denote smooth sections of $TL$, which are tangent vector fields to $L$, and we use $\xi, \eta, \zeta$ to denote smooth sections of $NL$, which are \emph{normal} vector fields to $L$. Finally, we use $\{ e_1, \ldots, e_k \}$ to denote a (local) orthonormal frame for $TL$. 

By an abuse of notation, we use $\ol{\nabla}$ to also denote the pullback $\iota^* \ol{\nabla}$ of the connection $\ol{\nabla}$ from $TM$ to $\iota^* (TM) = E = TL \oplus NL$. Let $\pi_{TL} \colon E \to TL$ and $\pi_{NL} \colon E \to NL$ be the orthogonal projection bundle maps from $E$ onto $TL$ and $NL$, respectively. Then $\nabla = \pi_{TL} \circ \ol{\nabla}$ is a connection on $TL$, and $D = \pi_{NL} \circ \ol{\nabla}$ is a connection on $NL$. It is a standard fact (and is easy to check) that $\nabla$ is the Levi-Civita connection of $(L, \rest{g}{L})$, and $D$ is called the \emph{normal connection} of the immersion. Note that $\nabla$ and $D$ are compatible with the fibre metrics on $TL$ and $NL$, respectively.

For $u, v \in \Gamma(TL)$ and $\xi \in \Gamma(NL)$, we define
\begin{align*}
A(u, v) & := \pi_{NL} (\ol{\nabla}_u v) = \ol{\nabla}_u v - \nabla_u v, \\
-A^{\xi} u & := \pi_{TL} (\ol{\nabla}_u \xi) = \ol{\nabla}_u \xi - D_u \xi.
\end{align*}
It is easy to see that $A(u,v)$ and $A^{\xi} u$ are $C^{\infty}(L)$-linear in $u, v, \xi$. By definition, we have
\begin{equation} \label{eq:2ndff-defn}
\begin{aligned}
\ol{\nabla}_u v & = \pi_{TL} (\ol{\nabla}_u v) + \pi_{NL} (\ol{\nabla}_u v) = \nabla_u v + A(u,v), \\
\ol{\nabla}_u \xi & = \pi_{TL}  (\ol{\nabla}_u \xi) + \pi_{NL} (\ol{\nabla}_u \xi) = - A^{\xi} u + D_u \xi.
\end{aligned}
\end{equation}

Since $\ol{\nabla}$ and $\nabla$ are both Levi-Civita connections, hence torsion-free, we have
$$ A(u, v) - A(v,u) = (\ol{\nabla}_u v - \ol{\nabla}_v u) - (\nabla_u v - \nabla_v u) = [u. v] - [u, v] = 0. $$
Thus $A(\cdot, \cdot)$ is a symmetric $NL$-valued bllinear form on $L$, called the \emph{second fundamental form}.

From $\langle v, \xi \rangle = 0$, metric compatibility, and~\eqref{eq:2ndff-defn}, we get
\begin{equation} \label{eq:2ndff-temp}
0 = u \langle v, \xi \rangle = \langle \ol{\nabla}_u v, \xi \rangle + \langle v, \ol{\nabla}_u \xi \rangle = \langle A(u, v), \xi \rangle - \langle v, A^\xi u \rangle.
\end{equation}
Symmetry of $A(\cdot, \cdot)$ thus implies that the bundle map $A^{\xi} \colon TL \to TL$ is pointwise \emph{self-adjoint}. This map is called the \emph{shape operator} or \emph{Weingarten map} of the immersion.

If we define $A^{\xi}(u, v) = \langle A(u, v), \xi \rangle$, then by~\eqref{eq:2ndff-temp} we have
\begin{equation} \label{eq:2ndff}
A^{\xi} (u,v) = \langle A(u, v), \xi \rangle = \langle A^{\xi} u, v \rangle = \langle u, A^{\xi} v \rangle = \langle A(v, u), \xi \rangle = A^{\xi} (v, u).
\end{equation}
The above shows explicitly that the second fundamental form and the shape operator are \emph{equivalent} pieces of data about the immersion, encoding the \emph{extrinsic geometry}.

For later use, we recall the following definition. The \emph{mean curvature} $H$ of $L$ in $M$ is the trace of the second fundamental form. That is,
$$ H = \sum_{j=1}^k A(e_j, e_j). $$
Note that $H$ is a \emph{normal} vector field on $L$. That is, it is a section of $NL$.

It is useful to modify our notation as follows. For $u \in \Gamma(TL)$, define $A_u \colon \Gamma(TL) \to \Gamma(NL)$ by
\begin{equation} \label{eq:A-modified}
A_u v := A(u,v),
\end{equation}
so that by~\eqref{eq:2ndff} we have $\langle A_u v, \xi \rangle = \langle v, A^{\xi} u \rangle$. This means that the adjoint $A_u^* \colon \Gamma(NL) \to \Gamma(TL)$ is given by
\begin{equation} \label{eq:A-star}
A_u^* \xi := A^{\xi} u.
\end{equation}
With this notation, the pair of equations~\eqref{eq:2ndff-defn} become
\begin{equation} \label{eq:2ndff-defn-b}
\begin{aligned}
\ol{\nabla}_u v & = (\ol{\nabla}_u v)^T + (\ol{\nabla}_u v)^{\perp} = \nabla_u v + A_u v, \\
\ol{\nabla}_u \xi & = (\ol{\nabla}_u \xi)^T + (\ol{\nabla}_u \xi)^{\perp} = - A_u^* \xi + D_u \xi.
\end{aligned}
\end{equation}
Thus, if $(v, \xi)$ is a section of $TL \oplus NL$, and $u$ is a vector field on $L$, then by~\eqref{eq:2ndff-defn-b} we have
\begin{equation} \label{eq:nablabar}
\ol{\nabla}_u (v, \xi) = \ol{\nabla}_u (v, 0) + \ol{\nabla}_u (0, \xi) = (\nabla_u v - A_u^* \xi, D_u \xi + A_u v).
\end{equation}
Observe that $\wh{\nabla} = \nabla \oplus D$ is also a connection on $TL \oplus NL$, where
\begin{equation} \label{eq:nablahat}
\wh{\nabla}_u (v, \xi) = (\nabla_u v, D_u \xi).
\end{equation}
If we write an element $(v, \xi)$ of the direct sum $\Gamma(TL \oplus NL) = \Gamma(TL) \oplus \Gamma(NL)$ as the $2 \times 1$ matrix $(v, \xi)^t$, then equation~\eqref{eq:nablahat} says that $\wh{\nabla}_u$ corresponds to the block diagonal matrix
\begin{equation} \label{eq:nablahat-2}
\wh{\nabla}_u = \begin{pmatrix} \nabla_u & 0 \\ 0 & D_u \end{pmatrix}.
\end{equation}
Define a global smooth section $B$ of the bundle $T^*L \otimes \End(TL \oplus NL)$ as follows. For $u \in \Gamma(TL)$, we set $B_u : = B(u)$ to be the section of $\End (TL \oplus NL)$ given by
\begin{equation} \label{eq:B}
B_u (v, \xi) = (- A_u^* \xi, A_u v).
\end{equation}
Similarly, as an operator on the direct sum $\Gamma(TL \oplus NL) = \Gamma(TL) \oplus \Gamma(NL)$, equation~\eqref{eq:B} says that $B_u$ corresponds to the skew-adjoint matrix
\begin{equation} \label{eq:B-2}
B_u = \begin{pmatrix} 0 & - A_u^* \\ A_u & 0 \end{pmatrix}.
\end{equation}
It is clear that $B$ contains exactly the same information as the second fundamental form, or the shape operator. For this reason, we abuse notation and refer to $B$ as the second fundamental form of the immersion.

From equations~\eqref{eq:2ndff-defn-b},~\eqref{eq:nablabar},~\eqref{eq:nablahat}, and~\eqref{eq:B}, we get
\begin{equation} \label{eq:nabla-relation}
\ol{\nabla}_u = \wh{\nabla}_u + B_u.
\end{equation}
Note that~\eqref{eq:B-2} shows that $B_u \colon \Gamma(E) \to \Gamma(E)$ is \emph{skew-adjoint} for each $u \in \Gamma(TL)$, which is precisely what is expected from~\eqref{eq:nabla-relation}, since both $\ol{\nabla}$ and $\wh{\nabla}$ are metric-compatible connections on $E$.

Let $\ol{R}$ and $\wh{R}$ denote the curvatures of the connections $\ol{\nabla}$ and $\wh{\nabla}$, respectively, and let $s = (v, \xi)^t \in \Gamma(E) = \Gamma(TL \oplus NL)$. Using~\eqref{eq:nabla-relation} and the Leibniz rule $\wh{\nabla}_u (B_v s) = (\wh{\nabla}_u B_v) s + B_v (\wh{\nabla}_u s)$, we compute
\begin{align*}
\ol{R}_{u, v} s & = \ol{\nabla}_u (\ol{\nabla}_v s) - \ol{\nabla}_v (\ol{\nabla}_u s) - \ol{\nabla}_{[u,v]} s \\
& = \ol{\nabla}_u (\wh{\nabla}_v s + B_v s) - \ol{\nabla}_v (\wh{\nabla}_u s + B_u s) - \wh{\nabla}_{[u,v]} s - B_{[u,v]} s \\
& = \wh{\nabla}_u (\wh{\nabla}_v s) + \wh{\nabla}_u (B_v s) + B_u (\wh{\nabla}_v s) + B_u (B_v s) \\
& \qquad {} - \wh{\nabla}_v (\wh{\nabla}_u s) - \wh{\nabla}_v (B_u s) - B_v (\wh{\nabla}_u s) - B_v (B_u s) - \wh{\nabla}_{[u,v]} s - B_{[u,v]} s \\
& = \wh{R}_{u,v} s + (\wh{\nabla}_u B_v) s - (\wh{\nabla}_v B_u) s + (B_u B_v - B_v B_u) s - B_{[u,v]} s.
\end{align*}
Observing that $\wh{\nabla}_u v = \nabla_u v$ and $[u, v] = \nabla_u v - \nabla_v u$, we have
\begin{align*}
(\wh{\nabla}_u B_v) - (\wh{\nabla}_v B_u) - B_{[u,v]} & = (\wh{\nabla}_u B)_v + B_{\wh{\nabla}_u v} - (\wh{\nabla}_v B)_u - B_{\wh{\nabla}_v u} - B_{(\nabla_u v - \nabla_v u)} \\
& = (\wh{\nabla}_u B)_v - (\wh{\nabla}_v B)_u.
\end{align*}
Combining these computations yields the fundamental relation between $\ol{R}$, $\wh{R}$, and $\wh{\nabla} B$, which is
\begin{equation} \label{eq:GCR-general}
\ol{R}_{u,v} = \wh{R}_{u,v} + (\wh{\nabla}_u B)_v - (\wh{\nabla}_v B)_u + [B_u, B_v],
\end{equation}
where $[B_u, B_v] = B_u B_v - B_v B_u$ is the commutator of linear operators.

\begin{rmk} \label{rmk:GCR}
Another common way of expressing~\eqref{eq:GCR-general} is in terms of the exterior covariant derivative $d^{\wh{\nabla}}$ of the connection $\wh{\nabla}$. Since $\ol{\nabla} = \wh{\nabla} + B$, it is well-known that
$$ \ol{R} = \wh{R} + d^{\wh{\nabla}} B + B \w B, $$
which is equivalent to~\eqref{eq:GCR-general}.
\end{rmk}

The fundamental relation~\eqref{eq:GCR-general} is a reformulation of the classical Gauss--Codazzi--Ricci equations of Riemannian submanifold theory. To see this, first observe from~\eqref{eq:nablahat-2} and~\eqref{eq:B-2} that
\begin{equation} \label{eq:GCR-temp}
\begin{aligned}
\wh{R}_{u,v} & = \begin{pmatrix} R^{\nabla}_{u,v} & 0 \\ 0 & R^D_{u, v} \end{pmatrix}, \\
(\wh{\nabla}_u B)_v - (\wh{\nabla}_v B)_u & = \begin{pmatrix} 0 & - (\wh{\nabla}_u A)^*_v + (\wh{\nabla}_v A)^*_u \\ (\wh{\nabla}_u A)_v - (\wh{\nabla}_v A)_u & 0 \end{pmatrix}, \\
B_u B_v - B_v B_u & = \begin{pmatrix} - A_u^* A_v + A_v^* A_u & 0 \\ 0 & - A_u A_v^* + A_v A_u^* \end{pmatrix}.
\end{aligned}
\end{equation}
As an aside, note that the commutator $[B_u, B_v]$ of the two skew-adjoint endomorphism $B_u, B_v$ is itself skew-adjoint, as it must be. Moreover, both operators $R^{\nabla}_{u,v}$ and $R^D_{u,v}$ are skew-adjoint, since $\nabla$ and $D$ are metric-compatible connections, as is their direct sum $\ol{R}_{u,v} = R^{\nabla}_{u,v} \oplus R^D_{u,v}$. Thus both sides of all three equations above are skew-adjoint endomorphisms. Expressing~\eqref{eq:GCR-general} in terms of the above $2 \times 2$ matrices, we have
\begin{equation} \label{eq:GCR-matrix-form}
\ol{R}_{u,v} = \begin{pmatrix} R^{\nabla}_{u,v} - A_u^* A_v + A_v^* A_u & - (\wh{\nabla}_u A)^*_v + (\wh{\nabla}_v A)^*_u \\ (\wh{\nabla}_u A)_v - (\wh{\nabla}_v A)_u & R^D_{u, v} - A_u A_v^* + A_v A_u^* \end{pmatrix}.
\end{equation}
The upper left block of~\eqref{eq:GCR-matrix-form} gives the classical \emph{Gauss equation}, which is
\begin{align*}
\langle \ol{R}_{u,v} w, y \rangle & = \langle (R^{\nabla}_{u,v} - A_u^* A_v + A_v^* A_u) w, y \rangle \\
& = \langle R^{\nabla}_{u,v} w, y \rangle + \langle A_u w, A_v y \rangle - \langle A_v w, A_u y \rangle.
\end{align*}
The lower right block of~\eqref{eq:GCR-matrix-form} gives the classical \emph{Ricci equation}, which, using~\eqref{eq:A-star} and the fact that the shape operator $A^{\xi}$ is self-adjoint, is
\begin{align*}
\langle \ol{R}_{u,v} \xi, \eta \rangle & = \langle (R^D_{u,v} - A_u A_v^* + A_v A_u^*) \xi, \eta \rangle \\
& = \langle R^D_{u,v} \xi, \eta \rangle + \langle A_u^* \xi, A_v^* \eta \rangle - \langle A_v^* \xi, A_u^* \eta \rangle \\
& = \langle R^D_{u,v} \xi, \eta \rangle + \langle A^{\xi} u, A^{\eta} v \rangle - \langle A^{\xi} v, A^{\eta} u \rangle \\
& = \langle R^D_{u,v} \xi, \eta \rangle - \langle (A^{\xi} A^{\eta} - A^{\eta} A^{\xi}) u, v \rangle.
\end{align*}
Finally, either of the off diagonal blocks of~\eqref{eq:GCR-matrix-form} gives the classical \emph{Codazzi equation}, which, using~\eqref{eq:A-modified} and~\eqref{eq:2ndff}, is
\begin{align*}
\langle \ol{R}_{u,v} w, \xi \rangle & = \langle ((\wh{\nabla}_u A)_v - (\wh{\nabla}_v A)_u) w, \xi \rangle \\
& = \langle (\wh{\nabla}_u A)(v,w) - (\wh{\nabla}_v A)(u, w), \xi \rangle \\
& = (\nabla_u A)^{\xi} (v, w) - (\nabla_v A)^{\xi} (u, w).
\end{align*}

\subsection{Calibrated submanifolds} \label{sec:calibrated}

Let $\alpha$ be a \emph{calibration} $k$-form on $(M, g)$. This means $\alpha$ is a \emph{closed} $k$-form, and that $\rest{\alpha}{x}$ is a calibration on the inner product space $(T_x M, g_x)$ for all $x \in M$. That is, $\rest{\alpha}{W_x} \leq \vol_{W_x}$ for any oriented $k$-plane $W_x \subset T_x M$, for any $x \in M$. An oriented $k$-plane $W_x \subset T_x M$ is called $\alpha$-\emph{calibrated} if $\rest{\alpha}{W_x} = \vol_{W_x}$. Let $L$ be an \emph{oriented} $k$-dimensional submanifold of $M$. We use $\vol_{L}$ to denote the induced volume form on $L$ from the given orientation and the induced metric $\rest{g}{L}$. We say that $L$ is $\alpha$-\emph{calibrated} if $T_x L$ is calibrated with respect to $\rest{\alpha}{x}$ for all $x \in L$. Equivalently, $L$ is $\alpha$-calibrated if
\begin{equation} \label{eq:calibrated}
\rest{\alpha}{L} = \vol_L.
\end{equation}
The fundamental theorem of calibrated geometry~\cite{HL} says that a calibrated submanifold is homologically volume minimizing, and thus in particular must have vanishing mean curvature $H$.

The condition~\eqref{eq:calibrated} that $L$ is calibrated is a \emph{first order} condition on the immersion $\iota \colon L \to M$, as it involves the pullback $\rest{\alpha}{L} = \iota^* \alpha$. By contrast, the extrinsic geometry of the Riemannian immersion $\iota \colon L \to M$, given by the second fundamental form $A$ and the induced tangent and normal connections $\nabla$ on $TL$ and $D$ on $NL$, respectively, is \emph{second order} information, as their definitions involve differentiation of sections of $\rest{TM}{L}$. In Section~\ref{sec:main-thm} we characterize the conditions imposed on the extrinsic geometric data $(A, \nabla, D)$ when a Riemannian immersion $\iota \colon L \to M$ is calibrated with respect to a \emph{parallel} and \emph{compliant} calibration.

\begin{defn} \label{defn:compliant-on-manifolds}
Suppose that $(M,g)$ is equipped with a $\rG$-structure (a reduction of the structure group of the frame bundle from $\GL{n, \R}$ to $\rG$) and that $\rG$ is of the form~\eqref{eq:G-defn}. This means there exist differential forms $\alpha_0, \ldots, \alpha_N$ on $M$, where $\alpha = \alpha_0$ is a calibration, such that for all $x \in M$ we have
$$ \rG_x = \{ P \in \SO{T_x M} : P^* \rest{\alpha_j}{x} = \rest{\alpha_j}{x}, \, \text{for $0 \leq j \leq N$} \} $$
corresponds to the conjugacy class of $\rG \subset \SO{n}$ under any oriented isometry $(T_x M, g_x) \cong (\R^n, \langle \cdot, \cdot \rangle)$.

Suppose further that the group $\rG_x$ acts transitively on the $\alpha_x$-calibrated Grassmanian $\mathrm{Gr}_{\alpha_x}$, so that $\mathrm{Gr}_{\alpha_x} = \rG_x / \rH_x$, where $\rH_x = \{ P \in \rG_x : P(W) = W \}$ for any $W \in \mathrm{Gr}_{\alpha_x}$.
Then we say $\alpha$ is \emph{compliant} if $\alpha_x$ is a compliant calibration on $(T_x M, g_x)$ in the sense of Definition~\ref{defn:compliant}. (Note that this condition holds for some $x \in M$ if and only if it holds for all $x \in M$.)
\end{defn}

\begin{defn} \label{defn:parallel-compliant}
Let $\alpha$ be a compliant calibration on $(M,g)$ in the sense of Definition~\ref{defn:compliant-on-manifolds}. Thus we have a finite set $\{ \alpha_0 = \alpha, \alpha_1, \ldots, \alpha_N \}$ of differential forms which are stabilized by $\rG$. We say $\alpha$ is \emph{parallel-compliant} if $\ol{\nabla} \alpha_j = 0$ for all $0 \leq j \leq N$. In particular, this means that $\alpha$ itself is parallel, but so are the rest of the forms $\alpha_1, \ldots, \alpha_N$. Moreover, from~\eqref{eq:G-defn} we obtain that $\Hol^0(\ol{\nabla}) \subset \rG$ and $\hol(\ol{\nabla}) \subset \frg$.
\end{defn}

\begin{rmk} \label{rmk:parallel-compliant}
In all the interesting geometric cases of interest, from Table~\ref{table:GH}, the calibration $\alpha$ being parallel-compliant in the sense of Definition~\ref{defn:parallel-compliant} is equivalent to the $\rG$-structure being \emph{torsion-free}.
\end{rmk}

Recall the orthogonal decomposition $E = \rest{TM}{L} = TL \oplus NL$ of~\eqref{eq:splitting}. We similarly have
$$ E^* = \rest{T^*M}{L} = T^*L \oplus N^*L, $$
where $T^*L$ is the dual bundle of $TL$, the \emph{cotangent bundle} of $L$, and $N^*L$ is the dual bundle of $NL$, the \emph{conormal bundle} of $L$. Note that $N^*L$ is the annihilator of $TL$ and similarly $T^*L$ is the annihilator of $NL$. In particular, we can pullback the bundle $\Lambda^k T^* M$ over $M$ by the smooth map $\iota \colon L \to M$ to obtain
$$ \iota^* (\Lambda^k T^* M) = \Lambda^k (\iota^* T^* M) = \Lambda^k (T^*L \oplus N^*L) = \bigoplus_{p+q=k} (\Lambda^p T^*L) \otimes (\Lambda^q N^*L). $$
The space $\Gamma( \Lambda^k E^*)$ of smooth sections thus decomposes as
\begin{equation*}
\Gamma( \Lambda^k E^*) = \bigoplus_{p+q=k} \Gamma \big( (\Lambda^p T^*L) \otimes (\Lambda^q N^*L) \big).
\end{equation*}
We say that an element of $\Gamma \big( (\Lambda^p T^*L) \otimes (\Lambda^q N^*L) \big)$ is of type $(p,q)$.

Let $\beta$ be a $k$-form on $M$. The pullback of $\beta \in \Gamma( \Lambda^k T^* M )$ by $\iota \colon L \to M$, \emph{as a section of a vector bundle, not as a differential form}, is a section $\iota^* \beta \in \Gamma( \iota^* \Lambda^k T^* M )$, and thus we can write
\begin{equation} \label{eq:forms-pq}
\iota^* \beta = \sum_{p+q = k} \beta^{p,q} \qquad \text{ where $\beta^{p,q}$ is of type $(p, q)$.}
\end{equation}
Note that $\beta^{k,0} \in \Gamma( \Lambda^k T^*L)$ is a smooth $k$-form on $L$, and is exactly the pullback of $\beta$ by $\iota$ \emph{as a differential form}. In particular, if $\alpha$ is a calibration $k$-form then $L$ is $\alpha$-calibrated if and only if $\alpha^{k,0} = \vol_L$.

\subsection{The main theorem: extrinsic geometry of calibrated submanifolds} \label{sec:main-thm}

In this section we characterize the conditions imposed on the extrinsic geometric data $(A, \nabla, D)$ for a Riemannian immersion $\iota \colon L \to M$ which is $\alpha$-calibrated for a \emph{parallel-compliant} calibration $\alpha$ on $(M, g)$.

Let $\alpha$ be a compliant calibration on $(M^n, g)$, as given by Definition~\ref{defn:compliant-on-manifolds}, and suppose that $L$ is $\alpha$-calibrated. Thus with
$$ \rH_x = \rG_x \cap (\SO{T_xL} \times \SO{N_xL}), $$
and hence
\begin{equation*}
\frh_x = \frg_x \cap (\Lambda^2 T_xN \oplus \Lambda^2N_xL),
\end{equation*}
the compliancy assumption says that
\begin{equation*}
\frh_x^{\perp_{\frg_x}} = \frg_x \cap (T_xN \otimes N_xL).
\end{equation*}

\begin{thm} \label{thm:main}
Let $\alpha$ be a parallel-compliant calibration, and let $L$ be an $\alpha$-calibrated submanifold. For $u \in \Gamma(TL)$, let $\wh{\nabla}_u$ and $B_u$ be as given by~\eqref{eq:nablahat} and~\eqref{eq:B}, respectively. Then we have
\begin{equation} \label{eq:main}
\wh{\nabla} \alpha_j = 0 \quad \text{and} \quad B_u \diamond \alpha_j = 0 \quad \text{for all $0 \leq j \leq N$}.
\end{equation}
Consequently, $\Hol^0 (\wh{\nabla}) \in \rH$ (so $\hol(\wh{\nabla}) \in \frh$), and $B_u \in \frg$.
\end{thm}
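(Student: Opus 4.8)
The plan is to exploit the decomposition $\ol{\nabla}_u = \wh{\nabla}_u + B_u$ from~\eqref{eq:nabla-relation}, together with the parallel-compliant hypothesis $\ol{\nabla}\alpha_j = 0$, and to separate the resulting identity by type with respect to the splitting $E = TL \oplus NL$. The forms $\alpha_j$ are sections of $\Lambda^k E^*$, and we have two connections on $E$: the ambient $\ol{\nabla}$ and the block-diagonal $\wh{\nabla} = \nabla \oplus D$. The key point is that $\wh{\nabla}$ preserves the type decomposition $\Lambda^k E^* = \bigoplus_{p+q=k}(\Lambda^p T^*L)\otimes(\Lambda^q N^*L)$, because it preserves $TL$ and $NL$ separately; whereas $B_u$, being of the off-diagonal form~\eqref{eq:B-2}, shifts type by $\pm 1$ in $q$ (it maps $TL$ into $NL$ and $NL$ into $TL$), exactly as $X_2 \in W \otimes W^\perp$ does in the proof of Proposition~\ref{prop:even-normals}. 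So when I expand $0 = \ol{\nabla}_u \alpha_j = \wh{\nabla}_u \alpha_j + B_u \diamond \alpha_j$ and decompose into types, the term $\wh{\nabla}_u\alpha_j$ stays within the same parity of $q$ as $\alpha_j$, while $B_u \diamond \alpha_j$ lands in the opposite parity.

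The first substantive step is to verify the parity statement. Since $\alpha$ is compliant and $L$ is $\alpha$-calibrated, Proposition~\ref{prop:even-normals-involution} and the characterization via $\phi$ tell us (in the interesting cases, and more generally once we know $\phi_x \in \wG_x$) that each $\alpha_j$ has all its $(p,q)$-components of a single parity in $q$. However, compliancy alone does not give the single-parity condition~\eqref{eq:even-normals} — Example~\ref{ex:involution-in-wG-not-sufficient} shows this. So I would instead argue directly from compliancy: it suffices to show that the type-preserving and type-shifting parts of $\ol{\nabla}_u\alpha_j = 0$ vanish separately. For this I note that $B_u \in \so(E)$ is the analogue of $X \in \frg$ in the fibrewise picture. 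The cleanest route is pointwise: at each $x \in L$, identify $(T_xM, g_x)$ with $(\R^n, \langle\cdot,\cdot\rangle)$ so that $T_xL = W$ and $N_xL = W^\perp$, and recall from~\eqref{eq:nabla-relation} that $\ol{\nabla}_u = \wh{\nabla}_u + B_u$ where $B_u \in W\otimes W^\perp = \so(n)_{-1}$ and $\wh{\nabla}_u\alpha_j$ transforms as an element built from $\so(n)_1$.

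The crux is then to show $\wh{\nabla}_u\alpha_j = 0$ and $B_u \diamond \alpha_j = 0$ separately, and for this the compliancy characterization of Theorem~\ref{thm:compliant-involution} — that $\Ad(\phi)$ preserves $\frg$ — is the natural tool. Concretely, I would apply $\Ad(\phi_x)$, equivalently $\phi_x^*$, to the identity $\ol{\nabla}_u\alpha_j = 0$. Because $\phi_x^*$ acts as $+1$ on the type-preserving part and $-1$ on the type-shifting part (this is precisely~\eqref{eq:sonpm} applied to the $k$-form rather than the Lie algebra), averaging $\ol{\nabla}_u\alpha_j$ with $\pm\phi_x^*(\ol{\nabla}_u\alpha_j)$ projects onto the two type-components, forcing each to vanish. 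This gives $\wh{\nabla}_u\alpha_j = 0$, hence $\wh{\nabla}\alpha_j = 0$ as claimed, and $B_u\diamond\alpha_j = 0$. The main obstacle I anticipate is making the $\phi$-averaging argument rigorous without assuming $\phi_x \in \wG_x$ (since that is strictly stronger than compliancy); the honest fix is to work at the level of $\frg$ via Theorem~\ref{thm:compliant-involution}, using $\frg = \frg\cap\so(n)_1 \oplus \frg\cap\so(n)_{-1}$ to split the equation rather than relying on $\phi$ acting on the forms themselves.

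Once~\eqref{eq:main} is established, the consequences follow formally. From $\wh{\nabla}\alpha_j = 0$ for all $j$ and the holonomy principle (parallel tensors reduce holonomy), the restricted holonomy $\Hol^0(\wh{\nabla})$ stabilizes each $\rest{\alpha_j}{x}$, so by the defining equation~\eqref{eq:G-defn} of $\rG$ we get $\Hol^0(\wh{\nabla})\subset \rH$ — here we use that $\wh{\nabla} = \nabla\oplus D$ already preserves the splitting $TL\oplus NL$, so its holonomy lies in $\SO{T_xL}\times\SO{N_xL}$, and combined with stabilizing the $\alpha_j$ this lands in $\rG_x \cap (\SO{T_xL}\times\SO{N_xL}) = \rH_x$ by~\eqref{eq:H-description}. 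Differentiating gives $\hol(\wh{\nabla})\subset\frh$. Finally, $B_u\diamond\alpha_j = 0$ for all $j$ together with $B_u \in \so(n)$ is exactly the condition~\eqref{eq:gh-defn} defining $\frg$, so $B_u\in\frg$, completing the proof of~\eqref{eq:intro}.
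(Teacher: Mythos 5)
Your overall strategy is the paper's: start from $\ol{\nabla}_u = \wh{\nabla}_u + B_u$ and $\ol{\nabla}\alpha_j = 0$, and use compliancy to force the block-diagonal and block-antidiagonal contributions to vanish separately. You also correctly diagnose that your first mechanism (applying $\phi_x^*$ and averaging, equivalently the parity-of-$q$ argument) requires $\phi_x \in \wG_x$, which is strictly stronger than compliancy, so it cannot carry the proof under the stated hypotheses.

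The gap is in your fallback. You say the fix is to ``work at the level of $\frg$, using $\frg = \frg\cap\so(n)_1 \oplus \frg\cap\so(n)_{-1}$ to split the equation,'' but the equation you want to split, $\wh{\nabla}_u\alpha_j + B_u\diamond\alpha_j = 0$, is an identity of $k$-forms, and $\wh{\nabla}_u\alpha_j$ is not yet visibly of the form $X\diamond\alpha_j$ for some $X \in \so(n)_1$, so there is nothing in $\frg$ to decompose. The missing ingredient is the following. Because $L$ is $\alpha$-calibrated, the bundle $E = TL\oplus NL$ carries an $\rH$-structure, so near each point there is a local orthonormal frame, adapted to the splitting, in which \emph{all} the $\alpha_j$ have constant coefficients. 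In such a frame $\wh{\nabla}_u\alpha_j = -\wh{\Gamma}_u\diamond\alpha_j$, where the Christoffel symbol $\wh{\Gamma}_u$ lies in $\Lambda^2 T_xL \oplus \Lambda^2 N_xL = \so(n)_1$ by the block-diagonal form of $\wh{\nabla}$. The identity then reads $(\wh{\Gamma}_u + B_u)\diamond\alpha_j = 0$ for all $j$, i.e.\ $\wh{\Gamma}_u + B_u \in \frg_x$ by~\eqref{eq:gh-defn}; since $B_u \in \so(n)_{-1}$, compliancy in the form $\frg = (\frg\cap\so(n)_1)\oplus(\frg\cap\so(n)_{-1})$ forces $\wh{\Gamma}_u \in \frh_x$ and $B_u \in \frg_x\cap(T_xL\otimes N_xL)$ separately, which yields both halves of~\eqref{eq:main} as well as the holonomy statement. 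Your concluding paragraph (the holonomy principle, and $B_u\diamond\alpha_j = 0$ for all $j$ implying $B_u\in\frg$) is fine once this step is in place.
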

\begin{proof}
The bundle $TM$ has a $\rG$-structure, and the bundle $E = \rest{TM}{L} = TL \oplus NL$ has an $\rH$-structure, and $\rH \subset \rG$. This means that in a neighbourhood of any point in $L$, we can find a local frame of $E$ of the form $\{ e_1, \ldots, e_k, \nu_1, \ldots, \nu_{n-k} \}$, where $\{ e_1, \ldots, e_k \}$ is a local oriented orthonormal frame for $TL$, and $\{ \nu_1, \ldots, \nu_{n-k} \}$ is a local oriented orthonormal frame for $NL$, with respect to which the forms $\alpha = \alpha_0, \alpha_1, \ldots, \alpha_N$ have constant coefficients. (In such a frame they agree with the standard models in Euclidean space.)

To simplify notation, use $\beta$ to denote an $m$-form $\alpha_j$ for some $0 \leq j \leq N$, so $\ol{\nabla} \beta = 0$. Using the Leibniz rules for $\ol{\nabla}$ and $\wh{\nabla}$, as well as the relation $\ol{\nabla}_u = \wh{\nabla}_u + B_u$ from~\eqref{eq:nabla-relation}, we compute
\begin{align*}
0 & = (\ol{\nabla}_u \beta)(v_1, \ldots, v_m) \\
& = u \big( \beta(v_1, \ldots, v_m) \big) - \beta( \ol{\nabla}_u v_1, \ldots, v_m) - \cdots - \beta( v_1, \ldots, \ol{\nabla}_u v_m) \\
& = (\wh{\nabla}_u \beta) (v_1, \ldots, v_m) + \beta (\wh{\nabla}_u v_1, \ldots, v_m) + \cdots + \beta(v_1, \ldots, \wh{\nabla}_u v_m) \\
& \qquad {} - \beta( \ol{\nabla}_u v_1, \ldots, v_m) - \cdots - \beta( v_1, \ldots, \ol{\nabla}_u v_m) \\
& = (\wh{\nabla}_u \beta) (v_1, \ldots, v_m) - \beta( B_u v_1, \ldots, v_m ) - \cdots - \beta( v_1, \ldots, B_u v_m) \\
& = \big( \wh{\nabla}_u \beta - B_u \diamond \beta \big) (v_1, \ldots, v_m).
\end{align*}
We thus have
\begin{equation} \label{eq:main-temp}
\wh{\nabla}_u \beta = B_u \diamond \beta.
\end{equation}
(Note that this part only uses the fact that $\ol{\nabla}{\beta} = 0$.)

Let $\wh{\Gamma}^c_{ab}$ denote the Christoffel symbols of $\wh{\nabla}$ with respect to our frame $\{ e_1, \ldots, e_k, \nu_1, \ldots, \nu_{n-k} \}$, which are local $\Lambda^2 E$-valued $1$-forms. Write $\wh{\Gamma}_u$ for the local $\End(E)$-valued section $\wh{\Gamma}(u)$.
Because $\beta$ has constant coefficients with respect to this frame, we have
$$ (\wh{\nabla}_u \beta)_{i_1 \cdots i_m} = - (\wh{\Gamma}_u)^c_{i_1} \beta_{c i_2 \cdots i_m} - \cdots - (\wh{\Gamma}_u)^c_{i_m} \beta_{i_1 \cdots i_{m-1} c} = - (\wh{\Gamma}_u \diamond \beta)_{i_1 \cdots i_m}, $$
so $\wh{\nabla}_u \beta = - \wh{\Gamma}_u \diamond \beta$, which allows us to write~\eqref{eq:main-temp} as
\begin{equation} \label{eq:main-temp2}
(\wh{\Gamma}_u + B_u ) \diamond \beta = 0,
\end{equation}
so for every $x \in L$, we have $\wh{\Gamma}_u + B_u \in \frg_x$. Because of the block diagonal form~\eqref{eq:nablahat-2} of $\wh{\nabla}_u = \nabla_u \oplus D_u$, we have that $\wh{\Gamma}_u$ is a local section of $\Lambda^2 TL \oplus \Lambda^2 NL$. Because of the block anti-diagonal form~\eqref{eq:B-2} of $B_u$, we have that $B_u$ is a section of $TL \otimes NL$.

Compliancy says that $ \frg_x = \frh_x \oplus \frh_x^{\perp_{\frg_x}}$, with $\frh_x = \frg_x \cap (\Lambda^2 T_xL \oplus \Lambda^2 N_xL)$ and $\frh_x^{\perp_{\frg_x}} = \frg_x \cap (T_x L \otimes N_x L)$. Thus for every $x \in L$ we have $\wh{\Gamma}_u \in \frh_x \subset \frg_x$ and $B_u \in \frh_x^{\perp_{\frg_x}} \subset \frg_x$. This implies that $\wh{\nabla}_u \beta = - \wh{\Gamma}_u \diamond \beta = 0$ and $B_u \diamond \beta = 0$.

The statement about the holonomy of $\wh{\nabla}$ follows from $\wh{\Gamma}_u \in \frh$.
\end{proof}

We make two remarks about the conditions which are consequences of Theorem~\ref{thm:main}.

\begin{rmk} \label{rmk:VCP}
Suppose that $\alpha$ is determined by a $(k-1)$-fold vector cross product $P \colon \Lambda^{k-1} TM \to TM$. That is,
$$ \alpha(X_1, \ldots, X_k) = g ( P(X_1, \ldots, X_{k-1}), X_k ). $$
(This is the case for $\alpha = \omega$, $\ph$, and $\Ph$ from Section~\ref{sec:compliancy-examples}.)

Then the condition $B_u \diamond \alpha = 0$ says that along $L$ we have
\begin{align*}
0 & = (B_u \diamond \alpha)(X_1, \ldots, X_k) \\
& = \alpha(B_u X_1, \ldots, X_k) + \cdots + \alpha(X_1, \ldots, B_u X_k) \\
& = g (P(B_u X_1, \ldots, X_{k-1}), X_k) + \cdots + g (P(X_1, \ldots, B_u X_{k-1}), X_k) + g(P(X_1, \ldots, X_{k-1}), B_u X_k).
\end{align*}
Using the fact that $B_u$ is skew-adjoint, this is equivalent to
\begin{equation} \label{eq:VCP-case}
B_u (P(X_1,\ldots, X_{k-1})) = P(B_u X_1, \ldots, X_{k-1}) + \cdots + P(X_1, \ldots, B_u X_{k-1}),
\end{equation}
which says that $P$ is in the kernel of the induced infinitesimal action of $B_u$.
\end{rmk}

\begin{rmk} \label{rmk:parallel-bundle-map}
Let $\alpha_j = \sum_{r=0}^k \alpha_j^{k-r,r}$ be the type $(p,q)$ decomposition of $\alpha_j$ defined in~\eqref{eq:forms-pq}. Since $\wh{\nabla}$ preserves types, the condition $\wh{\nabla} \alpha_j = 0$ implies that $\wh{\nabla} \alpha_j^{k-r,r} = 0$ for all $0 \leq r \leq k$. When $r=0$ and $j=0$, we know that $\alpha_0^{k,0} = \vol_L$, and $\wh{\nabla} = \nabla$ on $\Gamma(\Lambda^k T^* L)$, so this is automatic. But if $0 < r \leq k$ is such that $\alpha_j^{k-r,r} \neq 0$, then the condition $\wh{\nabla} \alpha_j^{k-r,r} = 0$ is nontrivial. This means we have a $\wh{\nabla}$-parallel bundle map $\alpha_j^{k-r,r} \colon (\Lambda^{k-r} T^*L, \nabla) \to (\Lambda^r N^* L, D)$. Precomposing this with the Hodge star operator of $L$, we obtain a parallel bundle map
$$ (\Lambda^r T^*L, \nabla) \to (\Lambda^{r}N^*L,D). $$
Schur's Lemma then implies the existence of isomorphic $\rH_x$-subrepresentations on each fibre. That is, there is a parallel isomorphism between certain rank-$m$ subbundles $\Lambda^r_m T^*L \subset \Lambda^r T^*L$ and $\Lambda^r_m N^*L\subset \Lambda^r N^*L$. (A parallel isomorphism is a bundle isomorphism that maps one connection to the other.)
\end{rmk}

Combining Theorem~\ref{thm:main} with the Gauss--Codazzi--Ricci relations of~\eqref{eq:GCR-general} yields the following.

\begin{cor} \label{cor:main-GCR}
Let $\alpha$ be a parallel-compliant calibration, and let $L$ be an $\alpha$-calibrated submanifold. Then we have
$$ [B_u, B_v] \in \frh \qquad \text{and} \qquad (\wh{\nabla}_u B)_v - (\wh{\nabla}_v B)_u \in \frh^{\perp_{\frg}} \qquad \text{for all $u, v \in \Gamma(TL)$.} $$
\end{cor}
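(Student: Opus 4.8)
The plan is to feed the Gauss--Codazzi--Ricci identity~\eqref{eq:GCR-general} into the compliancy splitting, using Theorem~\ref{thm:main} to control the curvature terms. First I would record that, since $\alpha$ is parallel-compliant, each $\alpha_j$ is $\ol{\nabla}$-parallel, so the curvature operator satisfies $\ol{R}_{u,v} \diamond \alpha_j = 0$ for every $j$; by~\eqref{eq:gh-defn} this means $\ol{R}_{u,v} \in \frg_x$ at each $x \in L$ (equivalently $\ol{R}_{u,v} \in \hol(\ol{\nabla}) \subset \frg$). This is the only input needed to place the left-hand side of~\eqref{eq:GCR-general} inside $\frg$.

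Next I would identify, at a point $x \in L$ with $W = T_xL$ and $W^{\perp} = N_xL$, the type of each term on the right of~\eqref{eq:GCR-general} using the block descriptions in~\eqref{eq:GCR-temp}. The terms $\wh{R}_{u,v}$ and $[B_u, B_v]$ are block diagonal, hence lie in $\Lambda^2 W \oplus \Lambda^2 W^{\perp} = \so(n)_1$, while $(\wh{\nabla}_u B)_v - (\wh{\nabla}_v B)_u$ is block anti-diagonal, hence lies in $W \otimes W^{\perp} = \so(n)_{-1}$, with $\so(n)_{\pm 1}$ as in~\eqref{eq:sonpm}. Thus~\eqref{eq:GCR-general} is precisely the decomposition of $\ol{R}_{u,v}$ into its $\so(n)_1$- and $\so(n)_{-1}$-components.

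The key step is then to invoke compliancy. Since $\alpha$ is compliant, the orthogonal splitting $\so(n) = \so(n)_1 \oplus \so(n)_{-1}$ restricts to $\frg = (\frg \cap \so(n)_1) \oplus (\frg \cap \so(n)_{-1}) = \frh \oplus \frh^{\perp_{\frg}}$ by~\eqref{eq:compliant-involution}. Because $\ol{R}_{u,v} \in \frg$, its $\so(n)_1$-component lies in $\frg \cap \so(n)_1 = \frh$ and its $\so(n)_{-1}$-component lies in $\frg \cap \so(n)_{-1} = \frh^{\perp_{\frg}}$. Reading off the two components gives
$$ \wh{R}_{u,v} + [B_u, B_v] \in \frh \qquad \text{and} \qquad (\wh{\nabla}_u B)_v - (\wh{\nabla}_v B)_u \in \frh^{\perp_{\frg}}. $$
The second containment is already the desired statement. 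For the first, I would use the holonomy consequence of Theorem~\ref{thm:main}, namely $\hol(\wh{\nabla}) \subset \frh$, which forces $\wh{R}_{u,v} \in \frh$; subtracting it yields $[B_u, B_v] \in \frh$.

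The argument is essentially a matching of block types against the compliancy decomposition, so I do not anticipate a serious obstacle. The one point that genuinely requires compliancy—and would fail otherwise—is the claim that the $\so(n)_{-1}$-projection of $\ol{R}_{u,v} \in \frg$ stays inside $\frg$ and lands in $\frh^{\perp_{\frg}}$; without~\eqref{eq:compliant} one only has $\frh^{\perp_{\frg}} \supseteq \frg \cap \so(n)_{-1}$, and the anti-diagonal conclusion could be lost. The other subtlety worth flagging is that deducing $[B_u, B_v] \in \frh$ (rather than merely $\wh{R}_{u,v} + [B_u, B_v] \in \frh$) relies crucially on the bound $\hol(\wh{\nabla}) \subset \frh$ supplied by Theorem~\ref{thm:main}.
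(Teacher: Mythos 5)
Your proof is correct and follows essentially the same route as the paper: feed the Gauss--Codazzi--Ricci identity~\eqref{eq:GCR-general} into the compliancy splitting of $\frg$, using $\ol{R}_{u,v} \in \frg$ together with the block types recorded in~\eqref{eq:GCR-temp}. The only (harmless) difference is that the paper deduces $[B_u, B_v] \in \frh$ from $B_u, B_v \in \frg$ (so the bracket lies in $\frg$) intersected with the always-valid identity $\frg \cap (\Lambda^2 TL \oplus \Lambda^2 NL) = \frh$, whereas you project $\ol{R}_{u,v}$ onto $\so(n)_1$ and subtract $\wh{R}_{u,v} \in \frh$; both steps rest on Theorem~\ref{thm:main}.
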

\begin{proof}
Since $(M, g)$ has a torsion-free $\rG$-structure, the holonomy algebra $\hol(\ol{\nabla})$ lies in $\frg$. By the Ambrose--Singer Theorem, this gives $\ol{R}_{u,v} \in \frg$. From the three equations in~\eqref{eq:GCR-temp}, we see that
$$ \wh{R}_{u,v} \, , \, [B_u, B_v] \in \Gamma(\Lambda^2 TL \oplus \Lambda^2 NL), \qquad (\wh{\nabla}_u B)_v - (\wh{\nabla}_v B)_u \in \Gamma(TL \otimes NL). $$
Theorem~\ref{thm:main} gives $B_u, B_v \in \frg$, so $[B_u, B_v] \in \frg$, and also $\wh{R}_{u,v} \in \frh \subset \frg$. Thus at every point in $L$ we have
$$ \wh{R}_{u,v} \, , \, [B_u, B_v] \in \frg \cap (\Lambda^2 TL \oplus \Lambda^2 NL) = \frh. $$
But then by~\eqref{eq:GCR-general} we have
$$ (\wh{\nabla}_u B)_v - (\wh{\nabla}_v B)_u = \ol{R}_{u,v} - \wh{R}_{u,v} - [B_u, B_v] \in \frg, $$
so  we get
$$ (\wh{\nabla}_u B)_v - (\wh{\nabla}_v B)_u \in \frg \cap (TL \otimes NL) = \frh^{\perp_{\frg}}, $$
as claimed.
\end{proof}

\subsection{Application of the main theorem to examples} \label{sec:main-examples}

In this section we examine the consequences of Theorem~\ref{thm:main} for the interesting geometric examples of Section~\ref{sec:calibrated-review}, which we showed were all compliant in Section~\ref{sec:compliancy-examples}. The results from complex submanifolds of K\"ahler manifolds have been well-known for a long time. By contrast, the results for special Lagrangian, associative, coassociative, and Cayley submanifolds may be known to some experts, but the authors have not seen them explicitly stated anywhere in the literature.

\begin{ex}[Complex submanifolds] \label{ex:complex-main} Let $(M^{2m}, g, \omega)$ be a manifold with torsion-free $\U{m}$-structure, also known as a K\"ahler manifold. Let $L^{2p}$ be a submanifold calibrated by $\frac{1}{p!} \omega^p$. Here $\rG = \U{m}$ and $\rH = \U{p} \times \U{m-p}$. In this case Theorem~\ref{thm:main} says that:
\begin{itemize}
\item $\Hol(\wh{\nabla}) \subset \rH$. This is equivalent to $\Hol(\nabla) \subset \U{p} $, and $\Hol(D) \subset \U{m-p}$.
\item $B_u \diamond \omega^j = 0$ for all $j=1, \ldots, p$. This is equivalent to $B_u \diamond \omega = 0$, so $B_u \in \mathfrak{u}(m)$.
\end{itemize}
Equation~\eqref{eq:complex-forms} shows that $\omega = \omega^{2,0} + \omega^{0,2}$, so the form $B_u \diamond \omega$ is type $(1,1)$. Thus, we can characterize the second fundamental form condition $B_u \diamond \omega = 0$ by evaluating on a pair $v, \xi$ where $v \in \Gamma(TL)$ and $\xi \in \Gamma(NL)$. Using~\eqref{eq:B} we get
\begin{align*}
0 & = (B_u \diamond \omega)(v, \xi) = \omega(B_u v, \xi) + \omega(v, B_u \xi) \\
& = \omega( A_u v, \xi ) - \omega( v, A_u^* \xi ).
\end{align*}
Recalling that $\omega( \cdot, \cdot ) = g (J \cdot, \cdot)$ where $J$ is the associated orthogonal complex structure, we get
\begin{align*}
0 & = g (J A_u v, \xi) - g(Jv, A_u^* \xi) = g (J A_u v, \xi) - g(A_u Jv, \xi), \\
& = - g(v, A_u^* J \xi) + g(v, J A_u^* \xi),
\end{align*}
which in turn is equivalent to either of the equations
\begin{equation} \label{eq:superminimal}
\begin{aligned}
A_u (J v) & = J (A_u v), \\
A_u^* (J \xi) & = J (A_u^* \xi),
\end{aligned}
\qquad \text{ for all $u, v \in \Gamma(TL)$ and $\xi \in \Gamma(NL)$.}
\end{equation}
Note that this is exactly what is given by~\eqref{eq:VCP-case} for $k=2$ and $P = J$. These are all well-known facts about complex submanifolds of K\"ahler manifolds.
\end{ex}

\begin{rmk} \label{rmk:complex-main}
Since $\U{1} = \SO{2}$, if $p=1$ then the condition on $\Hol(\nabla)$ is automatic, while if $p = m-1$ then the condition on $\Hol(D)$ is automatic. In particular, for a submanifold $L^2$ of real dimension $2$ which is calibrated by $\omega$ in a K\"ahler manifold $M^4$ of real dimension $4$, the only nontrivial condition is the condition~\eqref{eq:superminimal} on the second fundamental form. Classically, such surfaces are called \emph{superminimal}. It is easy to check that~\eqref{eq:superminimal} implies the vanishing of the mean curvature, as it must because calibrated submanifolds are volume-minimizing. See also Theorem~\ref{thm:superminimal}.
\end{rmk}

\begin{ex}[Special Lagrangian submanifolds] \label{ex:slag-main}
Let $(M^{2m}, g, \omega, \Upsilon)$ be a manifold with torsion-free $\SU{m}$-structure, also known as a Calabi-Yau manifold. Let $L^m$ be a submanifold calibrated by $\real (\Upsilon)$. Here $\rG = \SU{m}$ and $\rH = \SO{m}$, diagonally embedded, as explained following Table~\ref{table:GH}. In this case Theorem~\ref{thm:main} says that:
\begin{itemize}
\item $\hol(\wh{\nabla}) \subset \frh$.
\item $B_u \diamond \omega = 0$ and $B_u \diamond \Upsilon = 0$. This is equivalent to $B_u \in \mathfrak{su}(m)$.
\end{itemize}
The holonomy condition is equivalent to $\Hol(\nabla) = \Hol(D)$ as subgroups of $\SO{m}$. This could also be deduced, by Remark~\ref{rmk:parallel-bundle-map}, from the parallel bundle isomorphism $(TL, \nabla) \cong (NL, D)$ determined by $\omega$. Under this isomorphism, we have an identification of curvature operators $R^{\nabla} \cong R^D$.

In this case, equation~\eqref{eq:SLAG-forms} shows that $\omega = \omega^{1,1}$, so the form $B_u \diamond \omega$ is type $(2,0) + (0,2)$. Thus, we can characterize the second fundamental form condition $B_u \diamond \omega = 0$ by evaluating on a pair $v, w \in \Gamma(TL)$ and on a pair $\xi, \eta \in \Gamma(NL)$. Using~\eqref{eq:B} we get
\begin{align*}
0 & = (B_u \diamond \omega)(v, w) = \omega(B_u v, w) + \omega(v, B_u w) \\
& = \omega( A_u v, w ) + \omega( v, A_u w ).
\end{align*}
Computing as in Example~\ref{ex:complex-main}, we get
\begin{equation*}
0 = g (J A_u v, w) + g(Jv, A_u w) = g (J A_u v, w) + g(A_u^* Jv, w), \\
\end{equation*}
and similarly on $\xi, \eta$, which yields the pair of equations
\begin{equation*}
\begin{aligned}
J (A_u v) & = - A_u^* (J v), \\
J (A_u^* \xi) & = - A_u (J \xi),
\end{aligned}
\qquad \text{ for all $u, v \in \Gamma(TL)$ and $\xi \in \Gamma(NL)$.}
\end{equation*}
From $B_u \diamond \Upsilon = 0$, we get further identities on the second fundamental form. For example, equation~\eqref{eq:SLAG-forms} shows that $\real (\Upsilon)$ is of type $(m,0) + (m-2,2) + \cdots$, so $B_u \diamond (\real (\Upsilon))$ is of type $(m-1,1) + (m-3,3) + \cdots$. Thus, one consequence of $B_u \diamond \Upsilon = 0$ can be expressed by evaluating $B_u \diamond (\real (\Upsilon))$ on $v_1, \ldots, v_{m-1}, \xi$ where $v_i \in \Gamma(TL)$ and $\xi \in \Gamma(NL)$. Using~\eqref{eq:B} we get
\begin{align*}
0 & = (B_u \diamond (\real(\Upsilon))(v_1, \ldots, v_{m-1}, \xi) \\
& = (\real \Upsilon)(B_u v_1, \ldots, v_{m-1}, \xi) + \cdots + (\real \Upsilon)(v_1, \ldots, B_u v_{m-1}, \xi) + (\real \Upsilon)(v_1, \ldots, v_{m-1}, B_u \xi) \\
& = (\real \Upsilon)(A_u v_1, \ldots, v_{m-1}, \xi) + \cdots + (\real \Upsilon)(v_1, \ldots, A_u v_{m-1}, \xi) - (\real \Upsilon)(v_1, \ldots, v_{m-1}, A^*_u \xi).
\end{align*}
The other consequences are obtained similarly.
\end{ex}

Although not strictly necessary, we make a brief digression to discuss the decomposition of curvature for connections on rank-$4$ bundles in terms of quaternions. This allows us to give another demonstration of the consequences of Theorem~\ref{thm:main} for $\hol(\wh{\nabla})$ in the associative, coassociative, and Cayley cases.

As usual, we view $\SO{3} = \Sp{1} / \{\pm 1\}$ acting on $\R^3 = \imag(\Qu)$ as $([h], x) \mapsto h x \ol{h}$. Under the natural identifications $\imag(\Qu) = \mathfrak{sp}(1)$, and $\Lambda^2 (\R^3)^* = \mathfrak{so}(3)$, we have the induced isomorphism
\begin{equation} \label{eq:SO(3)-ident}
\imag(\Qu) \to \Lambda^2 (\R^3)^*, \qquad q \mapsto 2 \st q^{\flat},
\end{equation}
where $q^{\flat}$ is the $1$-form $q^{\flat}(y) = \langle q, y \rangle$, and $\st$ denotes the Hodge star on $\R^3$.

We also view $\SO{4} = \Sp{1}^2 / \{\pm 1\}$ acting on $\R^4 = \Qu$ as $([h_1,h_2], x) \longmapsto h_1 x \ol{h}_2$. This action, together with the natural identifications $\imag(\Qu) = \mathrm{Lie}(\Sp{1} \times \{1\})$, $\imag(\Qu) = \mathrm{Lie}(\{1\} \times \Sp{1})$, and $\Lambda^2 (\R^4)^* = \mathfrak{so}(4)$, yield the isomorphisms
\begin{equation} \label{eq:SO(4)-ident}
\begin{aligned}
\imag(\Qu) & = \mathrm{Lie}(\Sp{1} \times \{1\}) \to \Lambda^2_+ (\R^4)^*, \qquad q \mapsto \gamma_q^+(x,y) = \langle qx, y \rangle, \\
\imag(\Qu) & = \mathrm{Lie}(\{1\} \times \Sp{1}) \to \Lambda^2_- (\R^4)^*, \qquad q \mapsto \gamma_q^-(x,y) = \langle x\ol{q}, y \rangle.
\end{aligned}
\end{equation}
Let $h \in \Sp{1}$ and consider its action on $\Lambda^2 (\R^4)^*$ via pullback under left and right multiplications on $\Qu$, that we denote by $l_h$ and $r_h$. We have
\begin{equation}
l_h^* \gamma_q^+ = \gamma_{\ol{h} q h}^+, \qquad r_h^* \gamma_q^+ = \gamma_q^+, \qquad l_h^* \gamma_q^- = \gamma_q^-, \qquad r_h^* \gamma_q^- = \gamma_{h q \ol{h}}^-.
\end{equation}
Therefore the induced action of $\Sp{1}^2 / \{\pm 1 \}$ on $\Lambda^2 (\R^4)$ by pullback is
\begin{equation} \label{eq:SO(4)-Lambda2}
[h_1, h_2] \cdot (\gamma_{q_1}^+ + \gamma_{q_2}^-) = \gamma_{\ol{h}_1 q_1 h_1}^+ + \gamma_{\ol{h}_2 q_2 h_2}^-.
\end{equation}

Let $E \to L$ be real rank-$4$ oriented vector bundle endowed with a fibre metric and a metric-compatible connection $\wt{\nabla}$. Then $\hol (\wt{\nabla})$ decomposes into two subalgebras $\hol_{\pm} (\wt{\nabla})$ according to the splitting $\mathfrak{so}(4)= \Lambda^2_+\R^4 \oplus \Lambda^2_-\R^4$. The differential of the action described in~\eqref{eq:SO(4)-Lambda2} is $-\mathrm{ad}$, and it provides an identification between $\hol(\wt{\nabla})$ and $\hol(\Lambda^2 E^*)$ (see~\cite[Proposition 2.3.7]{Joyce}). Taking into account the splitting $\Lambda^2 E^* = \Lambda^2_+ E^* \oplus \Lambda^2_- E^*$, we have
\begin{equation} \label{eq:holonomy-E}
\hol(\Lambda^2_{\pm} E^*) = -\mathrm{ad} (\hol_{\pm}(\wt{\nabla})).
\end{equation}
 The curvature $\wt{R} \in \Gamma(\Lambda^2 TL \otimes \Lambda^2 E^*)$ of $\wt{\nabla}$ also decomposes as $\wt{R} = \wt{R}_+ + \wt{R}_-$ with $\wt{R}_{\pm} \in \Gamma(\Lambda^2 T^*L \otimes \Lambda^2_\pm E^*)$. These components are related to the curvature $\wt{F}_{\pm}$ of the bundles $\Lambda^2_{\pm} E^*$ (see~\cite[Proposition 2.1.9]{Joyce}) via the equation
\begin{equation}\label{eq:curvature-E}
\wt{F}_{\pm} = -\mathrm{ad} (\wt{R}_{\pm}).
\end{equation}

These identifications are useful if $L$ is $4$-dimensional, so that $E = TL$ with $\wt{\nabla} = \nabla$, and if $NL$ is a rank-$4$ vector bundle with $\wt{\nabla} = D$.

\begin{ex}[Associative submanifolds] \label{ex:assoc-main}
Let $(M^7, g, \ph, \ps = \st \ph)$ be a manifold with torsion-free $\G$-structure, also known as a torsion-free $\G$-manifold. Let $L^3$ be a submanifold calibrated by $\ph$. Here $\rG = \G$ and $\rH = \SO{4}$, embedded as explained following Table~\ref{table:GH}. In this case Theorem~\ref{thm:main} says that:
\begin{itemize}
\item $\hol(\wh{\nabla}) \subset \frh$. 
\item $B_u \diamond \ph = 0$. This is equivalent to $B_u \in \frg_2$.
\end{itemize}
To understand the holonomy condition, recall that $\rH = \SO{4}$ acts on $\R^7 = \imag (\Qu) \oplus \Qu$ by 
$$ [h_1, h_2] (x,y) = (h_1 x \ol{h}_1, h_1 y \ol{h}_2), \qquad [h_1, h_2] \in \Sp{1}^2 / \{\pm 1 \}, \quad (x,y) \in \imag (\Qu) \oplus \Qu. $$
Combining~\eqref{eq:SO(3)-ident} and~\eqref{eq:SO(4)-ident} we obtain
$$ \frh = \{ (2 \st q^{\flat}, \gamma_q^+), q \in \imag (\Qu)\} \oplus \Lambda^2_- \Qu^*. $$
This above also appears in~\cite{BM}. (Aside: it is shown directy in~\cite{BM} that $\frh^{\perp_{\frg_2}} = \frg_2 \cap( \mathrm{Im}(\Qu)\otimes \Qu)$, giving another demonstration of compliancy in this case.) Thus the condition $\hol(\wh{\nabla}) \subset \frh$ yields $\hol_+ (D) \cong \hol(\nabla)$. In addition, the component $R^D_+$ of the curvature on the normal bundle is determined by $R^{\nabla}$.

This could also be deduced, by Remark~\ref{rmk:parallel-bundle-map}, from the parallel bundle isomorphism $(\Lambda^2 T^*L, \nabla) \cong (\Lambda^2_+ N^*L, D)$ determined by the $(1, 2)$ component of $\ph$, along with equations~\eqref{eq:holonomy-E} and~\eqref{eq:curvature-E}.

For the second fundamental form condition,~\eqref{eq:assoc-form} shows that $\ph$ is of type $(3,0) + (1,2)$, so $B_u \diamond \ph$ is of type $(2,1) + (0,3)$. Thus, there are two consequences of $B_u \diamond \ph = 0$. The first can be expressed by evaluating $B_u \diamond \ph$ on $v_1, v_2, \xi$ where $v_i \in \Gamma(TL)$ and $\xi \in \Gamma(NL)$. Using~\eqref{eq:B} one computes as before that
\begin{equation*}
0 = \ph(A_u v_1, v_2, \xi) + \ph(v_1, A_u v_2, \xi) - \ph(v_1, v_2, A^*_u \xi).
\end{equation*}
Similarly the $(0,3)$ component of $B_u \diamond \ph = 0$ yields
$$ \ph(A_u^* \xi_1, \xi_2, \xi_3) + \ph(\xi_1, A_u^* \xi_2, \xi_3) + \ph(\xi_1, \xi_2, A_u^* \xi_3) = 0. $$
Equivalently, by~\eqref{eq:VCP-case} with $k=2$ and $P = \times$ is the cross product given by $\ph(X_1, X_2, X_3) = g(X_1 \times X_2, X_3)$, these conditions can be repackaged as
\begin{align} \label{eq:assoc-2ndff}
A_u (v_1 \times v_2) & = (A_u v_1) \times v_2 + v_1 \times (A_u v_2), \\ \nonumber
- A_u^* (v_1 \times \xi_1) & = (A_u v_1) \times \xi_1 - v_1 \times (A_u^* \xi_1), \\ \nonumber
A_u (\xi_1 \times \xi_2) & = - (A_u^* \xi_1) \times \xi_2 - \xi_1 \times (A_u^* \xi_2),
\end{align}
for $u, v_1,v_2 \in \Gamma(TL)$, $\xi_1, \xi_2 \in \Gamma(NL)$.
\end{ex}

\begin{ex}[Coassociative submanifolds] \label{ex:coassoc-main}
Let $(M^7, g, \ph, \ps = \st \ph)$, $\rG = \G$, and $\rH = \SO{4}$ be as in Example~\ref{ex:assoc-main}. Let $L^4$ be a submanifold calibrated by $\ps$. In this case Theorem~\ref{thm:main} again gives:
\begin{itemize}
\item $\hol(\wh{\nabla}) \subset \frh$. 
\item $B_u \diamond \ps = 0$. This is equivalent to $B_u \in \frg_2$.
\end{itemize}
However, this time the roles of $TL$ and $NL$ are reversed, so the holonomy condition $\hol(\wh{\nabla}) \subset \frh$ tells us that $\hol(D) = \hol_+ (\nabla)$. In addition, the curvature $R^D$ of the normal bundle is determined by $R^{\nabla}_+$. It is well-known that on a $4$-manifold, $R^{\nabla}_+$ depends on the scalar curvature $s$, the self-dual part $W_+$ of the Weyl tensor, and the traceless part of the Ricci endomorphism $\mathrm{Ric}^0$.

Again, this could also be deduced, by Remark~\ref{rmk:parallel-bundle-map}, from the parallel bundle isomorphism $(\Lambda^2_+ T^*L, \nabla) \cong (\Lambda^2 N^*L, D)$ determined by the $(2, 2)$ component of $\ps$, along with equations~\eqref{eq:holonomy-E} and~\eqref{eq:curvature-E}.

For the second fundamental form condition,~\eqref{eq:coassoc-form} shows that $\ps$ is of type $(4,0) + (2,2)$, so $B_u \diamond \ps$ is of type $(3,1) + (1,3)$. Thus, there are two consequences of $B_u \diamond \ps = 0$. The first can be expressed by evaluating $B_u \diamond \ps$ on $v_1, v_2, v_3, \xi$ where $v_i \in \Gamma(TL)$ and $\xi \in \Gamma(NL)$. Using~\eqref{eq:B} one computes as before that
\begin{equation*}
0 = \ps(A_u v_1, v_2, v_3, \xi) + \ps(v_1, A_u v_2, v_3, \xi) + \ps(v_1, v_2, A_u v_3, \xi) - \ps(v_1, v_2, v_3, A^*_u \xi).
\end{equation*}
Similarly the $(1,3)$ component of $B_u \diamond \ps = 0$ yields
$$ \ps(A_u v_1, \xi_1, \xi_2, \xi_3) - \ps(v_1, A_u^* \xi_1, \xi_2, \xi_3) - \ps(v_1, \xi_1, A_u^* \xi_2, \xi_3) - \ps(v_1, \xi_1, \xi_2, A_u^* \xi_3) = 0. $$
Equivalently, from the fact an element $B \in \mathfrak{so}(7)$ lies in $\frg_2$ if and only if $B(X \times Y) = (BX) \times Y + X \times (BY)$ for all $X, Y \in \R^7$, the condition $B_u \in \frg_2$ can also be repackaged as
\begin{align*}
- A_u^* (v_1 \times v_2) & = (A_u v_1) \times v_2 + v_1 \times (A_u v_2), \\
A_u (v_1 \times \xi_1) & = (A_u v_1) \times \xi_1 - v_1 \times (A_u^* \xi_1), \\
- A_u^* (\xi_1 \times \xi_2) & = - (A_u^* \xi_1) \times \xi_2 - \xi_1 \times (A_u^* \xi_2),
\end{align*}
for $u, v_1, v_2 \in \Gamma(TL)$, $\xi_1, \xi_2 \in \Gamma(NL)$.
\end{ex}

\begin{ex}[Cayley submanifolds] \label{ex:Cayley-main}
Let $(M^8, g, \Ph)$ be a manifold with torsion-free $\Spin{7}$-structure, also known as a torsion-free $\Spin{7}$-manifold. Let $L^4$ be a submanifold calibrated by $\Ph$. Here $\rG = \Spin{7}$ and $\rH = \Sp{1}^3 / \{ \pm 1 \}$, embedded as explained following Table~\ref{table:GH}. In this case Theorem~\ref{thm:main} says that:
\begin{itemize}
\item $\hol(\wh{\nabla}) \subset \frh$. 
\item $B_u \diamond \Ph = 0$. This is equivalent to $B_u \in \mathfrak{spin}(7)$.
\end{itemize}
To understand the holonomy condition, recall that $\rH = \Sp{1}^3 / \{\pm 1 \}$ acts on $\R^8 = \Qu \oplus \Qu$ by 
$$ [h_1, h_2, h_3](x,y) = (h_1 x \ol{h}_2, h_1 x \ol{h}_3), \qquad [h_1, h_2, h_3] \in \Sp{1}^3 / \{\pm 1 \}, \quad (x, y) \in \Qu \oplus \Qu. $$
Denote $\Qu_1 = \Qu \oplus \{0\}$ and $\Qu_2 = \{0\} \oplus \Qu$. From~\eqref{eq:SO(4)-ident} we obtain
$$ \frh = \{ (\gamma_q^+, \gamma_q^+), q \in \imag(\Qu) \} \oplus \Lambda^2_- \Qu_1^* \oplus \Lambda^2_- \Qu_2^*. $$
The above also appears in~\cite{BM}. (Aside: it is shown directy in~\cite{BM} that $\frh^{\perp_{\mathfrak{spin}(7)}} = \mathfrak{spin}(7) \cap( \Qu_1 \otimes \Qu_2)$, giving another demonstration of compliancy in this case.) Thus the condition $\hol(\wh{\nabla}) \subset \frh$ yields $\hol_+ (D) = \hol_+ (\nabla)$. In addition, the component $R^D_+$ of the curvature on the normal bundle is determined by $R^{\nabla}_+$. As in the coassociative case, $R^{\nabla}_+$ depends on the scalar curvature $s$, the self-dual part $W_+$ of the Weyl tensor, and the traceless part of the Ricci endomorphism $\mathrm{Ric}^0$.

Again, this could also be deduced, by Remark~\ref{rmk:parallel-bundle-map}, from the parallel bundle isomorphism $(\Lambda^2_+ T^*L, \nabla) \cong (\Lambda^2_+ N^*L, D)$ determined by the $(2, 2)$ component of $\Ph$, along with equations~\eqref{eq:holonomy-E} and~\eqref{eq:curvature-E}.

For the second fundamental form condition,~\eqref{eq:Cayley-form} shows that $\Ph$ is of type $(4,0) + (2,2) + (0,4)$, so $B_u \diamond \Ph$ is of type $(3,1) + (1,3)$. Thus, there are two consequences of $B_u \diamond \Ph = 0$. The first can be expressed by evaluating $B_u \diamond \Ph$ on $v_1, v_2, v_3, \xi$ where $v_i \in \Gamma(TL)$ and $\xi \in \Gamma(NL)$. Using~\eqref{eq:B} one computes as before that
\begin{equation*}
0 = \Ph(A_u v_1, v_2, v_3, \xi) + \Ph(v_1, A_u v_2, v_3, \xi) + \Ph(v_1, v_2, A_u v_3, \xi) - \Ph(v_1, v_2, v_3, A^*_u \xi).
\end{equation*}
Similarly the $(1,3)$ component of $B_u \diamond \ph = 0$ yields
$$ \Ph(A_u v_1, \xi_1, \xi_2, \xi_3) - \Ph(v_1, A_u^* \xi_1, \xi_2, \xi_3) - \Ph(v_1, \xi_1, A_u^* \xi_2, \xi_3) - \Ph(v_1, \xi_1, \xi_2, A_u^* \xi_3) = 0. $$
Equivalently, by~\eqref{eq:VCP-case} with $k=3$ and $P$ is the $3$-fold cross product given by $\Ph(X_1, X_2, X_3 X_4) = g(P(X_1, X_2, X_3), X_4)$, these conditions can be repackaged as
\begin{align*}
A_u ( P(v_1, v_2, v_3)) & = P(A_u v_1, v_2, v_3) + P(v_1, A_u v_2, v_3) + P(v_1, v_2, A_u v_3), \\
- A_u^* ( P(v_1, v_2, \xi_1)) & = P(A_u v_1, v_2, \xi_1) + P(v_1, A_u v_2, \xi_1) - P(v_1, v_2, A_u^* \xi_1), \\
A_u( P(v_1, \xi_1, \xi_2)) & = P(A_u v_1, \xi_1, \xi_2) - P(v_1, A_u^* \xi_1, \xi_2) - P(v_1, \xi_1, A_u^* \xi_2), \\
- A_u^* ( P(\xi_1, \xi_2, \xi_3)) & = - P(A_u^* \xi_1, \xi_2, \xi_3) - P(\xi_1, A_u^* \xi_2, \xi_3) - P(\xi_1, \xi_2, A_u^* \xi_3),
\end{align*}
for $u, v_1,v_2,v_3 \in TL$, $\xi_1, \xi_2, \xi_3 \in NL$.
\end{ex}

\subsection{Infinitesimally $\alpha$-calibrated submanifolds} \label{sec:inf-calib}

Our main result, Theorem~\ref{thm:main}, motivates the following discussion. Let $(n, k, \alpha, \rG, \rH)$ be one of the rows from Table~\ref{table:GH}, so $\rG \subset \SO{n}$ and the embedding $\rH \subset \rG$ is as described following the table. In each case, we in fact have $\rH = \rG \cap (\SO{k} \times \SO{n-k})$.

Let $(M^n, g)$ be an oriented Riemannian manifold of dimension $n$. We do \emph{not} assume that $(M, g)$ is equipped with a calibration $k$-form $\alpha$. Let $L^k$ be an oriented $k$-dimensional submanifold of $M$. Let $\wh{\nabla} = \nabla \oplus D$ be the induced connection on $E = \rest{TM}{L} = TL \oplus NL$. Let $B$ denote the packaging of second fundamental form $A$ of $L$ in $M$ as given by~\eqref{eq:B}.

\begin{defn} \label{defn:H-str}
We say that $L$ has an $\rH$-structure if there is a reduction of the structure group of the frame bundle of $E$ from $\SO{k} \times \SO{n-k}$ to $\rH$. This is equivalent to the existence of a $\rG$-structure on the $\R^n$-bundle $E$ over $L$ for which each fibre of $TL$ is $\alpha$-calibrated. Note that the $k$-form $\alpha$ exists \emph{only along $L$}. That is, we have a section $\alpha \in \Gamma( \Lambda^k E^* ) = \oplus_{p+q=k} \Gamma \big( (\Lambda^p T^*L) \otimes (\Lambda^q N^*L) \big)$, but we do \emph{not} assume that $\alpha$ is the pullback by the immersion $\iota \colon L \to M$ of a $k$-form on $M$.
\end{defn}

\begin{defn} \label{defn:inf-calib}
Suppose that $L$ is equipped with an $\rH$-structure as in Definition~\ref{defn:H-str}. Then we say that $L$ is \emph{infinitesimally $\alpha$-calibrated} if
$$ \hol(\wh{\nabla}) \subset \frh \qquad \text{and} \qquad B_u \in \frg \, \text{ for all $u \in \Gamma(TL)$.} \qedhere $$
\end{defn}

By Theorem~\ref{thm:main}, these are precisely the two conditions on the extrinsic geometric data $(A, \nabla, D)$ that hold if $L$ is in fact calibrated with respect to a parallel-compliant calibration $\alpha$.

We claim that an infinitesimally $\alpha$-calibrated submanifold is necessarily \emph{minimal}. That is, it has vanishing mean curvature $H$. This can be checked directly in each of the five cases of Table~\ref{table:GH} using the condition $B_u \in \frg$. We demonstrate with two specific examples.

In the case of an infinitesimally $\frac{1}{p!} \omega^p$-calibrated submanifold, we have a $\U{p} \times \U{m-p}$-structure on $E$. This means we have orthogonal complex structures $J_T$ and $J_N$ on $TL$ and $NL$, respectively, and thus an orthogonal complex structure $J = J_T \oplus J_N$ on $E$. From~\eqref{eq:superminimal} we have
$$ A_u (J v) = J (A_u v) \quad \text{for $u, v \in \Gamma(TL)$}. $$
The above gives, using~\eqref{eq:2ndff}, that for any $\xi \in \Gamma(NL)$ we have
\begin{align*}
\langle A(Ju, Ju), \xi \rangle & = \langle A_{Ju} (Ju), \xi \rangle = \langle J (A_{Ju} u), \xi \rangle = - \langle A_{Ju} u, J \xi \rangle \\
& = - \langle A_u (Ju), J \xi \rangle = - \langle J (A_u u), J \xi \rangle = - \langle A_u u, \xi \rangle = - \langle A(u, u), \xi \rangle.
\end{align*}
Thus for a local orthonormal frame $\{ e_1, J e_1, \ldots, e_p, J e_p \}$ of $L$ we obtain
$$ \langle H, \xi \rangle = \sum_{i=1}^p \langle A(e_i, e_i), \xi \rangle + \sum_{i=1}^p \langle A(J e_i, J e_i), \xi \rangle = 0, $$
as claimed.

In the case of an infinitesimally $\ph$-calibrated submanifold, an $\rH = \SO{4}$-structure on $E$ is equivalent to a $\G$-structure on the $\R^7$-bundle $E$ over $L$ for which each fibre of $TL$ is an associative subspace, and thus each fibre of $NL$ is a coassociative subspace. Thus we have a $2$-fold cross product $\times$ on the fibres of $E$. From~\eqref{eq:assoc-2ndff}, we have
$$ A_u (v_1 \times v_2) = (A_u v_1) \times v_2 + v_1 \times (A_u v_2). $$
The above gives, using~\eqref{eq:2ndff}, that for any $\xi \in \Gamma(NL)$ we have
\begin{align*}
\langle A(u \times v, u \times v), \xi \rangle & = \langle A_{u \times v}(u \times v), \xi \rangle = \langle (A_{u \times v} u) \times v + u \times (A_{u \times v}) v, \xi \rangle \\
& = \langle A_{u \times v} u , v \times \xi \rangle - \langle A_{u \times v} v, u \times \xi \rangle = \langle A_u (u \times v), v \times \xi \rangle - \langle A_v (u \times v), u \times \xi \rangle \\
& = \langle (A_u u) \times v + u \times (A_u v), v \times \xi \rangle - \langle (A_v u) \times v + u \times (A_v v), u \times \xi \rangle.
\end{align*}
Using the identity $\langle X \times Y, Z \times W \rangle = \langle X \wedge Y, Z \wedge W \rangle + \ps (X, Y, Z, W)$ (see~\cite[Remark 3.72]{K-intro}, where the opposite orientation is used), then if $u, v$ are \emph{orthonormal} the above becomes
\begin{align*}
\langle A(u \times v, u \times v), \xi \rangle & = \langle (A_u u) \wedge v, v \wedge \xi \rangle + \ps(A_u u, v, v, \xi) + \langle u \wedge (A_u v), v \wedge \xi) + \ps(u, A_uv, v, \xi) \\ & \qquad {} - \langle (A_v u) \wedge v, u \wedge \xi \rangle - \ps(A_v u, v, u, \xi) - \langle u \wedge (A_v v), u \wedge \xi) - \ps(u, A_v v, u, \xi) \\
& = - \langle A_u u, \xi \rangle - \langle A_v v, \xi \rangle.
\end{align*}
We can choose a local orthonormal frame of the form $\{ e_1, e_2, e_1 \times e_2 \}$.  Thus we have
$$ \langle H, \xi \rangle = \langle A_{e_1} e_1, \xi \rangle + \langle A_{e_2} e_2, \xi \rangle + \langle A_{e_1 \times e_2} (e_1 \times e_2), \xi \rangle = 0, $$
as claimed.

\begin{rmk} \label{rmk:necess-minimal}
One might hope that there exists a general argument demonstrating that $B_u \in \frg$ forces the trace of $A$ to vanish. In fact, in the cases where $\alpha$ arises from a vector cross product, which are the cases $\alpha = \omega, \ph, \Ph$, then one can give a unified proof that infinitesimally $\alpha$-calibrated implies minimal. It might be possible to extend this argument to include the special Lagrangian and coassociative cases, or more general compliant calibrations. (See, for example,~\cite[Proposition 2.13]{CKM} for a result which shows that the coassociative calibration behaves similarly to calibrations defined by vector cross products.)

However, this question could also be seen as analogous to the following. For Riemannian manifolds with special holonomy (other than K\"ahler), one can show on a case-by-case basis from Berger's list that the holonomy algebra constraint on the curvature tensor forces the metric to be Einstein. There does not seem to be a way to prove this in a unified way. However, for those special holonomy metrics which are not K\"ahler or quaternionic-K\"ahler, it is known that they admit parallel spinors, and then there is a spin-geometric argument that establishes Ricci-flatness. See~\cite{F, W} for details. Thus there may be a spin-geometric analogue in this case.
\end{rmk}

A natural question that arises is the following.

\begin{quest} \label{quest:inf-calib}
Suppose that $L^k$ is an infinitesimally $\alpha$-calibrated submanifold of $(M^n, g)$. Under what conditions can be conclude that there exists a globally defined parallel-compliant calibration $\alpha$ on $M$ such that $L$ is (genuinely) $\alpha$-calibrated?
\end{quest}

Of course, if $L$ is calibrated with respect to a parallel (and hence closed) calibration form $\alpha$, then not only must $L$ be minimal, but it must also be \emph{stable}, since calibrated submanifolds are homologically volume minimizing. Thus, although an infinitesimally $\alpha$-calibrated submanifold is indeed always minimal, we need to further assume that it is also stable to have any hope of answering Question~\ref{quest:inf-calib}. (Here stable means that the second variation of the volume functional at such a critical point is nonnegative.) A classical result along these lines is the following theorem (stated somewhat imprecisely, but including our annotations to relate the result to the present paper.)

\begin{thm}[Micallef~\cite{M}] \label{thm:superminimal}
Let $L^2$ be an oriented surface in $\R^4$. Assume that $L^2$ is a \emph{complete and stable} minimal surface. [In this case, by Remark~\ref{rmk:complex-main}, the surface $L$ \emph{always} admits a canonical (unique) $\rH = \U{1} \times \U{1}$-structure.] If $L^2$ is infinitesimally $\omega$-calibrated [which Micallef calls \emph{superminimal}], and if a further technical assumption also holds, then there exists a global parallel (hence integrable) orthogonal complex structure $J$ on $\R^4$ with respect to which $L$ is a complex submanifold. [That is, there is a K\"ahler structure $\omega$ on $\R^4$ compatible with the Euclidean metric such that $L$ is (genuinely) $\omega$-calibrated.]
\end{thm}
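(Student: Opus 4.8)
\emph{Proof proposal.} The plan is to produce the complex structure $J$ by showing that one component of the Gauss map of $L$ is constant. Recall that for an oriented $2$-plane in $\R^4$ the decomposable unit $2$-vector splits under $\Lambda^2(\R^4) = \Lambda^2_+(\R^4) \oplus \Lambda^2_-(\R^4)$, so the Gauss map of $L$ factors as $g = (g_+, g_-) \colon L \to S^2_+ \times S^2_-$, where $S^2_{\pm}$ is the unit sphere in $\Lambda^2_{\pm}(\R^4)$. The orthogonal complex structures on $\R^4$ compatible with the metric and orientation are parametrized by $S^2_+$, and $L$ is a complex submanifold for the structure $J$ corresponding to $q \in S^2_+$ precisely when $g_+ \equiv q$. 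Thus it suffices to prove that $g_+$ is constant; its value then determines a constant (hence parallel and integrable) $J$ on $\R^4$ calibrating $L$.

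First I would record two structural facts. Since $L$ is minimal, a classical computation shows that $g_+$ and $g_-$ are respectively anti-holomorphic and holomorphic with respect to the conformal structure of $(L, \rest{g}{L})$ and the standard complex structures on the spheres (for a suitable choice of orientations). Next, combining the Gauss and Ricci equations of Section~\ref{sec:extrinsic-review} with the flatness $\ol{R} = 0$ of the ambient $\R^4$, and writing the second fundamental form in an oriented orthonormal frame, one finds the identities
\begin{equation*}
K + K^{\perp} = -\big[(p+s)^2 + (q-r)^2\big], \qquad K - K^{\perp} = -\big[(p-s)^2 + (q+r)^2\big],
\end{equation*}
where $K$ is the Gauss curvature, $K^{\perp}$ the normal curvature, and $p, q, r, s$ are the independent components of $A$ allowed by minimality. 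In particular $K \pm K^{\perp} \le 0$, with $K - K^{\perp}$ proportional to $-|dg_+|^2$.

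I would then feed in the hypothesis that $L$ is infinitesimally $\omega$-calibrated. The condition~\eqref{eq:superminimal}, namely $A_u(J_T v) = J_N(A_u v)$ for the canonical $\U{1} \times \U{1}$-structure, is exactly the vanishing $p = s$, $q = -r$, that is, $K = K^{\perp}$ everywhere on $L$. Hence $|dg_+|^2 \equiv 0$, so $g_+$ is constant on the connected surface $L$, which produces the desired $J$. This already settles the flat case once superminimality is granted pointwise; the content of the remaining hypotheses is to guarantee that this algebraic condition is the correct global one and—following Micallef—to \emph{derive} superminimality when it is available only in the stable/infinitesimal sense rather than as a pointwise identity.

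The main obstacle is precisely this last point: upgrading stability into the curvature identity $K = K^{\perp}$ globally. The plan here is Micallef's: use the complex structure $J_N$ on the oriented rank-$2$ normal bundle to manufacture, out of the (anti-)holomorphic Gauss data, admissible compactly supported normal variations $V$, and insert them into the stability inequality $\int_L \big( |D V|^2 - \langle \cdots \rangle \big) \ge 0$. After integrating by parts, stability should force one of the nonnegative densities $-(K + K^{\perp})$ or $-(K - K^{\perp})$ to vanish identically, which is superminimality for $g_-$ or $g_+$. The technical assumption (finite total curvature, equivalently parabolicity of $L$) is what justifies the global integration by parts—via logarithmic cutoff functions whose Dirichlet energy tends to zero—so that the pointwise inequality integrates to a genuine global vanishing with no boundary contribution at infinity. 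Controlling these boundary terms, and ensuring the test sections are globally well defined on $L$, is the crux of the argument.
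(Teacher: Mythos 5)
The paper does not prove this statement: Theorem~\ref{thm:superminimal} is quoted (with annotations) from Micallef~\cite{M}, so there is no internal proof to compare your proposal against. Judged on its own merits, your sketch tracks Micallef's actual argument faithfully: the splitting of the Gauss map into $g_\pm$ via $\Lambda^2(\R^4) = \Lambda^2_+ \oplus \Lambda^2_-$, the identification of positively-oriented orthogonal complex structures with points of $S^2_+$, the (anti)holomorphicity of $g_\pm$ for minimal surfaces, the curvature identities $K \pm K^\perp = -[(\cdot)^2 + (\cdot)^2]$ from Gauss--Ricci with $\ol{R}=0$, and the use of stability with logarithmic cutoffs (justified by the parabolicity-type technical hypothesis) to kill one of the two nonnegative densities. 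Your computation that the condition~\eqref{eq:superminimal} for the canonical $\U{1}\times\U{1}$-structure is exactly $p=s$, $q=-r$, hence $|dg_+|^2 \equiv 0$, is correct up to sign conventions on $K^\perp$.

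One point worth making sharper. In the paper's formulation, infinitesimal $\omega$-calibration is a \emph{pointwise} algebraic condition on $A$ (it is $B_u \in \frg$ together with the automatic holonomy condition, since $\U{1}=\SO{2}$), so your step 4 already yields the conclusion for any connected superminimal surface with no appeal to completeness or stability: the theorem as annotated in the paper is essentially the classical ``superminimal $\Rightarrow$ holomorphic for a constant OCS'' fact. The completeness, stability, and technical hypotheses belong to the genuinely hard half of Micallef's theorem, which \emph{derives} superminimality rather than assuming it; your final two paragraphs conflate these two readings slightly. Also be aware that the analytic core you gesture at --- manufacturing admissible complex normal variations from the holomorphic Gauss data and extracting the vanishing of $-(K\pm K^\perp)$ from the second variation --- is substantially more delicate than ``integrate by parts'' (it uses the holomorphic structure on the complexified normal bundle induced by $D$), but as a sketch of a cited result this level of detail is reasonable.
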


Taking these observations into account, a more refined version of Question~\ref{quest:inf-calib} is the following.

\begin{quest} \label{quest:inf-calib-2}
Let $L^k$ be an oriented submanifold in $(M^n, g)$. Suppose that $L^k$ is a \emph{complete and stable} minimal submanifold. Assume that $L$ admits an $\rH$-structure, for some row $(n, k, \alpha, \rG, \rH)$ of Table~\ref{table:GH}. If $L$ is infinitesimally $\alpha$-calibrated, then do there exist further technical hypotheses which would ensure that there exists a globally defined parallel-compliant calibration $\alpha$ on $M$ such that $L$ is (genuinely) $\alpha$-calibrated?
\end{quest}

\addcontentsline{toc}{section}{References}

\end{document}